\documentclass{svjour3}
\journalname{}
\usepackage{amsmath,amsfonts,amssymb}
\usepackage{graphicx}
\usepackage{tikz-cd}
\usepackage{multirow}

\begin{document}
\title{Implicit-explicit all-speed schemes for compressible Cahn-Hilliard-Navier-Stokes
  equations}
\author{Andreu Martorell, Pep Mulet, Dionisio F. Yáñez}
\institute{
  Department of Mathematics,  Universitat de
  València    (Spain); email: mulet@uv.es.
}
\maketitle

\begin{abstract}
    We propose a second-order implicit-explicit (IMEX) time-stepping scheme for the isentropic, compressible Cahn-Hilliard-Navier-Stokes equations in the low Mach number regime.
    The method is based on finite differences on staggered grids and is specifically designed to handle the challenges posed by the low Mach number limit, where the system approaches to an incompressible behavior. 
    In this regime, standard explicit schemes suffer from severe time-step restrictions due to fourth-order diffusion terms and the stiffness induced by fast acoustic waves.
    To overcome this, we employ an IMEX strategy which splits the governing equations into stiff and non-stiff components.
    The stiff terms, arising from pressure, viscous forces and fourth-order Cahn-Hilliard contributions, are treated implicitly, while the remaining are dealt explicitly.
    \keywords{Asymptotic preserving, low Mach number, implicit-explicit schemes, incompressible limit, isentropic compressible Cahn-Hilliard-Navier-Stokes}
\end{abstract}

\section{Introduction}

In fluid dynamics, the Cahn-Hilliard (CH) equation describes the phenomenon of phase separation in two-phase systems \cite{CahnHilliard59}. 
It captures the temporal evolution of a mixture of immiscible fluids through the formation of diffuse interfaces between the phases. 
This diffusive-interface approach is well-suited for explaining topological changes, such as layered structures in sedimenting colloidal suspensions \cite{Siano79}.

However, the classical CH model does not take into account the fluid dynamics such as motion of the medium, viscosities, or external forces like gravity.
To incorporate these effects, the CH system is combined with the Navier-Stokes equations, which describe the conservation of mass and momentum balance of fluids. 
The resulting system of partial differential equations, known as the Cahn-Hilliard-Navier-Stokes (CHNS), provides a thermodynamically consistent framework for modeling multiphase flows \cite{AbelsFeireisl08,Boyer99,LT98}. 
The CHNS model has been applied to a wide variety of problems: separation of immiscible fluids, bubble dynamics or the evolution of interfaces in multiphase systems \cite{Kynch52,Siano79}.

In many practical applications, the flow occurs in a low Mach number regime.
The Mach number, defined as the ratio between the characteristic fluid velocity and the speed of sound, becomes small in these situations.
Under these conditions, the flow behaves as a nearly incompressible \cite{abels_liu_necasova_24,Alazard06,LM98}, but compressible effects are still present.

Designing efficient numerical schemes for the compressible CHNS equations for all Mach number regimes presents several challenges. 

First, as the squared low Mach number $\delta$ tends to zero, the system becomes increasingly stiff due to the presence of fast acoustic waves with characteristic speeds, namely, $\mathbf{v} \pm \frac{1}{\delta}c,$ where $\mathbf{v}$ is the velocity filed and $c$ represents the speed of sound.
This stiffness is further incremented by the presence of up to fourth-order spatial derivatives in the CHNS equations leading to discrete operators with large eigenvalues. 
As a consequence, explicit solvers are severely constrained by stability conditions,
$$\Delta t \approx \mathcal{O}\left(\Delta x^4\right) + \mathcal{O}\left(\frac{\delta}{\max|\delta\mathbf{v}\pm c|}\right),$$ 
where $\Delta t$ and $\Delta x$ are the time and spatial step size, respectively.
In addition, when solving ODE systems of the form $z^\prime=f(z)$, the appearance of stiff, negative-definite Jacobians allows implicit methods to take larger time steps, whereas explicit methods are limited to $\Delta t\approx\mathcal{O}(|\lambda|^{-1})$,
with $|\lambda|$ the largest absolute magnitude of the Jacobian.

Another challenge arises from the intrinsic structure of the CH equation, whose gradient-flow nature is associated with a non-convex energy functional \cite{LT98}. 
To ensure stability, a common strategy is to decompose the energy into the difference of two differentiable convex functions. 
By treating the contractive part implicitly and the expansive part explicitly, one can construct unconditionally stable IMEX scheme, as shown in \cite{Eyre98,Vollmayr-Lee-Rutenberg2003}. 

One way of dealing with these difficulties is to split the pressure into stiff and non-stiff components and treating the stiff part implicitly (for e.g. \cite{cordier_degond_kumbaro_12,degond_tang_11}).
Therefore, the time step is no longer constrained by the Mach number. 
Similarly, the stiff fourth-order CH terms are treated implicitly, while the remaining are handled explicitly \cite{MMY25,mulet_24}.
To maintain stability and preserve symmetry, one can discretize the variables on staggered grids \cite{harlow_welch_65,Patankar18}. 

As shown in \cite{abels_liu_necasova_24}, when the Mach number tends to zero, the solutions of the compressible CHNS equations converge to those of its incompressible counterpart. 
Building on this, numerical methods for the compressible CHNS equations are proposed in \cite{HeShi20,MMY25,mulet_24}, while for the quasi-incompressible and incompressible CHNS models are presented in \cite{ChenZhao20,HanWang15,Jia20,LiXu21}.
Numerical schemes for the low Mach number of the compressible Euler equations are discussed in \cite{cordier_degond_kumbaro_12,degond_tang_11,Haack-Jin-Liu2012,NBALMM14}.

The goal of this work is to develop a second-order IMEX Runge-Kutta scheme for the isentropic, compressible CHNS equations {in low Mach}{for all Mach} number regimes.
The method, based on finite differences on staggered grids, is designed to be an Asymptotic Preserving (AP) scheme \cite{degond_tang_11}, meaning that its stability and accuracy are independent of the Mach number and that it correctly captures the incompressible limit as $\delta\to0$.

The outline of the current work is organized as follows. 
In Section \ref{section_CHNS} we introduce the isentropic, compressible CHNS equations in a low Mach number regime.
Section \ref{section_numerical_schemes} presents a partitioned IMEX Runge-Kutta scheme for the two-dimensional case, and in Section \ref{section_AP} its AP property is proven.
In Section \ref{section_numerical_experiments}, numerical experiments are performed in order to verify the stability, accuracy, and efficiency of our proposed scheme.
Finally, Section \ref{section_conlusion} summarizes the main conclusions and discusses the future directions of our research.

\section{Cahn-Hilliard-Navier-Stokes Equations}\label{section_CHNS}
\subsection{Model Description}
We follow the model from \cite{AbelsFeireisl08,abels_liu_necasova_24} describing the dynamics of two immiscible, compressible, viscous Newtonian fluids in a bounded, open domain $\Omega \subset \mathbb{R}^3$. 
Let $\rho_i$, $c_i$, $\mathbf{v}_i$ denote, respectively, the density, mass concentration, velocity $\mathbf{v}_i$ of the fluid $i=1,2$.
The mixture density is $\rho$ and the barycentric velocity, $\mathbf{v}$, is $\rho\mathbf{v} = \rho_1\mathbf{v}_1 + \rho_2\mathbf{v}_2$.
The concentration difference $c = c_1 - c_2$, taking values in $[-1,1]$, serves as an order parameter distinguishing the two fluid components.

The total Helmholtz free energy of the system is defined as
$$\mathcal{E}(\rho, c)=\int_\Omega\!\left(\rho f(\rho, c) + \frac{\varepsilon}{2}|\nabla c|^2\right)\ d\mathbf{x},$$
where the positive parameter $\varepsilon$ is related to the diffuse interface thickness, which controls the width of the transition region between the two phases.
The term $\frac{\varepsilon}{2}|\nabla c|^2$ represents the classical Cahn-Hilliard regularizing term \cite{LT98}. 
The specific Helmholtz free energy is assumed to have the form
$$f(\rho, c) = f_e(\rho) + \psi(c),$$
where $f_e$ is the potential energy and 
\begin{equation}\label{eq_poly_double_well}
    \psi(c) = \frac{1}{4}\left(c^2-1\right)^2,
\end{equation} 
is a double-well potential.
The thermodynamic pressure is related to the potential energy through the relation $p(\rho)=\rho^2\frac{\partial f(\rho, c)}{\partial\rho}$.
For an isentropic process, we adopt 
$$f_e(\rho)=C_p\frac{\rho^{\gamma-1}}{\gamma-1},\quad\text{so that}\quad p(\rho) = C_p\rho^\gamma,$$ 
for a positive constant $C_p$ and $\gamma>1$ is the adiabatic constant.

The evolution of the mixture is governed by the isentropic, compressible Cahn-Hilliard-Navier-Stokes with gravitational force,
\begin{equation}\label{compressible_CHNS}
  \left\{
  \begin{split}
    &\rho_t + \operatorname{div}(\rho \mathbf{v}) = 0,\\[2pt]
    &(\rho\mathbf{v})_t + \operatorname{div}(\rho\mathbf{v}\otimes\mathbf{v}) + \nabla p = \rho\mathbf{g} + \operatorname{div}\mathbb{T},\\[2pt]
    &(\rho c)_t + \operatorname{div}(\rho c\mathbf{v}) = \Delta\mu,
  \end{split}
  \right.
\end{equation}
where the operators $\operatorname{div}$ and $\Delta$ represents the divergence and laplacian operators, respectively.
The first equation expresses the conservation of total mass of the mixture, the second equation represents the balance of momentum taking into account the gravitation acceleration $\mathbf{g}$, and the third is a Cahn–Hilliard-type equation modeling the evolution of the concentration difference $c$.
The viscous and capillary effects \cite{Ericksen91} are incorporated into the model through the stress tensor $\mathbb{T} = \mathbb{T}_1 + \mathbb{T}_2$ with
$$
    \mathbb{T}_1(\mathbf{v}) = \nu(\nabla\mathbf{v} + \nabla\mathbf{v}^T) + \lambda\operatorname{div}\mathbf{v}\mathbb{I},
    \quad
    \mathbb{T}_2(c) = \frac{\varepsilon}{2}|\nabla c|^2\mathbb{I} - \varepsilon(\nabla c\otimes\nabla c),
$$
where $\lambda$ and $\nu$ are the viscosity coefficients, which are assumed to be positive. 
The chemical potential is defined as the variational derivative of the total free energy with respect to the order parameter $c$. In this setting, it is given by
$$\rho\mu = \frac{\delta\mathcal{E}}{\delta c} = \rho\frac{\partial f}{\partial c} - \varepsilon\Delta c.$$

The system \eqref{compressible_CHNS} is supplemented with initial conditions for the density, velocity, and concentration, given by $\rho(0,x) = \rho_0(x)$, $\mathbf{v}(0,x) = \mathbf{v}_0(x)$, and $c(0,x) = c_0(x)$.
In addition, we impose the boundary conditions
\begin{equation}\label{bdry_cond}
    \mathbf{v}\left.\right|_{\partial\Omega} = \nabla c\cdot\mathbf{n}\left.\right|_{\partial\Omega} = \nabla \mu\cdot\mathbf{n}\left.\right|_{\partial\Omega} = 0,
\end{equation}
where $\mathbf{n}$ denotes the outward unit normal vector to the boundary $\partial\Omega$.

In \cite[Theorem 1.2]{AbelsFeireisl08}, it was proven that, for $\gamma > \frac{3}{2}$ and suitable initial data $(\rho_0, \mathbf{v}_0, c_0)$, system \eqref{compressible_CHNS} with boundary conditions \eqref{bdry_cond} admits global-in-time weak solutions in the sense of Di Perna and Lions.

\subsection{Isentropic compressible Cahn-Hilliard-Navier-Stokes in a low Mach number regime}
In the present work, we focus on the low Mach number regime, which corresponds to taking $C_p \gg 0$. 
In such regimes, the pressure becomes extremely large, and in many numerical methods (e.g., \cite{MMY25,mulet_24}), $C_p$ appears explicitly in the time step stability restriction, 
$$\Delta t \approx \mathcal{O}\left(\sqrt{C_{p}^{-1}}\Delta x\right),$$
where $\Delta t$ and $\Delta x$ are the time and spacial step sizes, respectively, leading to severe restrictions on the time step. 

To this end, we split the pressure into a stiff and non-stiff component, treating the stiff part implicitly \cite{cordier_degond_kumbaro_12,degond_tang_11}. 
Specifically, we write
\begin{equation}\label{pressure_splitting}
  p(\rho) = C_{p,1} \rho^\gamma + C_{p,2} \rho^\gamma, \quad \text{with} \quad C_{p,1} + C_{p,2} = C_{p}, \quad C_{p,2} \gg C_{p,1}.
\end{equation}
We denote $p_1(\rho) = C_{p,1} \rho^\gamma$ and $p_2(\rho) = C_{p,2}\rho^\gamma$ with the squared Mach number defined as $\delta = C_{p}^{-1}$.

In general, the choice of the pressure splitting depends on the characteristic fluid speed. 
For working in a hyperbolic framework and suppressing nonphysical oscillations, we consider $C_{p,1}>0$. 
However, for flows with stronger shocks, a larger value of $C_{p,1}$ is necessary, corresponding to an almost fully explicit treatment of the pressure \cite{degond_tang_11}. 
In the current work, the effects of this choice are investigated numerically in the section of numerical experiments.

We consider both the one- and two-dimensional setting. 
For the latter case, we denote the velocity field by $\mathbf{v} = (v_1, v_2)$ and the gravity $\mathbf{g}=(0,g)$ acting only on the vertical axes.
Under this assumption, the governing equations reduce to the following two-dimensional form:
\begin{equation}\label{eq_compressible_chns_2D}
	\begin{split}
		 {\rho_t} + 
		 {(\rho v_1)_x + (\rho v_2)_y} =&\enspace 0, \\[8pt]
		 {(\rho v_1)_t} + 
		 {(\rho v_1^2 + p_1(\rho))_x + (\rho v_1 v_2)_y}
		=&\enspace  - (p_2(\rho))_x + {\frac{\varepsilon}{2}(c_y^2 - c_x^2)_x - \varepsilon (c_x c_y)_y} +\\[5pt] 
		&\quad  {\nu \Delta v_1 + (\nu + \lambda)((v_1)_{xx} + (v_2)_{xy})}, \\[8pt]
		 {(\rho v_2)_t} + 
		 {(\rho v_2^2 + p_1(\rho))_y + (\rho v_1 v_2)_x}
		=&\enspace  {\rho g} - (p_2(\rho))_y +  {\frac{\varepsilon}{2}(c_x^2 - c_y^2)_y - \varepsilon (c_x c_y)_x} +\\[5pt] 
		&\quad {\nu \Delta v_2 + (\nu + \lambda)((v_1)_{xy} + (v_2)_{yy})}, \\[8pt]
		 {(\rho c)_t} + 
		 {(\rho c v_1)_x + (\rho c v_2)_y} =&\enspace  {\Delta \left( \psi'(c) - \frac{\varepsilon}{\rho} \Delta c \right)}.
	\end{split}
\end{equation}
For the one-dimensional case the system reads as follows:
\begin{equation}\label{eq_compressible_chns_1D}
	\begin{split}
		 {\rho_t} + 
		 (\rho v)_x =&\enspace 0, \\[8pt]
		 {(\rho v)_t} + {(\rho v^2 + p_1(\rho))_x}
		=&\enspace  - (p_2(\rho))_x + \rho g + \left(\left(2\nu+\lambda\right)v_{x} - \frac{\varepsilon}{2}c^2_x\right)_x\\[8pt] 
		 {(\rho c)_t} + 
		 (\rho c v)_x =&\enspace  \left( \psi'(c) - \frac{\varepsilon}{\rho} \Delta c \right)_{xx}.
	\end{split}
\end{equation}

\section{Numerical Schemes}\label{section_numerical_schemes}

\subsection{Spatial Semidiscretization}
We consider the compressible, isentropic CHNS equations in two-spatial dimensions \eqref{eq_compressible_chns_2D} on the square domain $\Omega=[0,1]^2$.
The computational grid is based on a MAC approach \cite{harlow_welch_65}.
Let $\mathbf{x}=(x,y)$ denote the spatial variable. 
The cell-centered grid consists of $M^2$ nodes
$$
    \mathbf{x}_{i,j} = \left(x_i, y_j\right)
    \quad\text{where}\quad 
    x_{i}=\left(i-\tfrac{1}{2}\right)h,\quad y_{j} = \left(j-\tfrac{1}{2}\right)h,
$$
for $i,j=1,\cdots,M$, and with uniform mesh size $h = \frac{1}{M}$. 
The staggered (dual) grid consists in $2M(M-1)$  nodes:
\begin{equation*}
    \begin{split}
        &\mathbf{x}_{i+\frac{1}{2},j},\quad\text{for}\quad j = 1,\cdots, M,\enspace i=1,\cdots,M-1,\\
        &\mathbf{x}_{i,j+\frac{1}{2}},\quad\text{for}\quad i = 1,\cdots, M,\enspace j=1,\cdots,M-1,
    \end{split}
\end{equation*}
where $x_{i+\frac{1}{2}} = ih$ and $y_{j+\frac{1}{2}} = jh$.
The continuity and the Cahn-Hilliard type equations are treated at cell centers, while the momentum equations are treated at the dual points: the horizontal component on vertical cell faces and the vertical component on horizontal cell faces.
To compute momentum at staggered points, we used local averages of the density. 
For instance, for the horizontal momentum we define $(\rho_{*,x})_{i,j}=\rho_{*,i+\frac{1}{2},j} = \frac{1}{2}\left(\rho_{i+i,j} + \rho_{i,j}\right)$ so that 
$$(\rho v_1)(\mathbf{x}_{i+\frac{1}{2},j}) = \rho_{*,i+\frac{1}{2},j}v_{1, i+\frac{1}{2},j},$$
for $i=1,\cdots,M-1$, $j=1,\cdots,M$. 
Notice that $v_{1, i+\frac{1}{2},j} = (\rho v_1)_{i+\frac{1}{2},j}/\rho_{*,i+\frac{1}{2},j}$.
We denote by $\rho_*\mathbf{v} = (\rho_{*,x}v_1,\rho_{*,y}v_2)$. 
For the velocity components, we assume no-slip boundary conditions, i.e.
$$v_{1,\frac{1}{2},j} = v_{1,M+\frac{1}{2},j} = 0,\text{ and } v_{1,i,\frac{1}{2}} = v_{1,i,M+\frac{1}{2}} = 0.$$
For the points outside the wall, we assume symmetric reflection for the density and odd reflection for the velocity components. 
For instance,
\begin{equation*}
    \begin{aligned}
        \rho_{i,j} &= \rho_{|i|+1,j},        &\qquad 
        v_{1,i-1/2,j} &= -\,v_{1,|i|+1+1/2,j}, 
        & i &\in \{0,-1,-2,\ldots\},\\[4pt]
        \rho_{i,j} &= \rho_{i-1,j},          &\qquad 
        v_{1,i+1/2,j} &= -\,v_{1,i-1/2,j},    
        & i &\in \{M+1,M+2,M+3,\ldots\},
    \end{aligned}
\end{equation*}
for $j=1,\ldots,M$. 
Similarly, it is done in the other direction.

After applying this spatial semi-discretization to \eqref{eq_compressible_chns_2D}, one needs to solve a system of $N=2M^2+2M(M-1)$ ordinary differential equations given by
\begin{equation}\label{semi_discrete_system}
    \begin{split}
        U^\prime(t) &= \mathcal{L}(U(t))\\
        U(0) &= U_0,
    \end{split}
\end{equation}
where $U_0$ is the vector of initial conditions and the unknown variables $U = (u_k)_{k=1}^4$ are
$$\rho_{i,j} = u_{1,i,j},\quad (\rho v_1)_{i+\frac{1}{2},j} = u_{2,i+\frac{1}{2},j} \quad (\rho v_2)_{i,j+\frac{1}{2}} = u_{3,i,j+\frac{1}{2}},\quad (\rho c)_{i,j} = u_{4,i,j},$$
for $i,j$ running over their respective grid indices.
Dropping the time dependence of $U$, $\mathcal{L}(U)$ is the nonlinear operator storing the spatially discretized differential operators,
$$\mathcal{L}(U)= {\mathcal{C}_1}(U) + {\mathcal{C}_2}(U)+{\mathcal{L}}_1(U)+{\mathcal{L}}_2(U)+{\mathcal{L}}_3(U)+{\mathcal{L}}_4(U),$$ 
where nonzero terms of operators above are defined as follows:
\begin{equation*}
    \begin{split}
        {\mathcal{C}_1}(U)_{1,i,j}&\approx -(      (\rho v_1)_x+(\rho v_2)_{y})(\mathbf{x}_{i,j}, t),\\
        {\mathcal{C}_2}(U)_{2,i+\frac{1}{2},j}&\approx -((\rho v_1^2 + p_1(\rho))_{x}+(\rho v_1 v_2)_{y})(\mathbf{x}_{i+\frac{1}{2},j}, t),\\              
        {\mathcal{C}_2}(U)_{3,i,j+\frac{1}{2}}&\approx -((\rho v_1v_2)_{x}+(\rho v_2^2+p_1(\rho))_{y})(\mathbf{x}_{i,j+\frac{1}{2}}, t),\\  
        {\mathcal{C}_2}(U)_{4,i,j}&\approx -(       (\rho c v_1)_{x}+(\rho c v_2)_{y})(\mathbf{x}_{i,j}, t),\\
        {\mathcal{L}}_1(U)_{2,i+\frac{1}{2},j}
            &\approx\left(- \left(p_2(\rho)\right)_x\right)((\mathbf{x}_{i,j+\frac{1}{2}},t)),\\
        {\mathcal{L}}_1(U)_{3,i,j+\frac{1}{2}}
            &\approx\left(\rho g - \left(p_2(\rho)\right)_y\right)((\mathbf{x}_{i,j+\frac{1}{2}},t)),\\
        {\mathcal{L}}_2(U)_{2,i+\frac{1}{2},j}&\approx 
            \varepsilon(
            \frac{1}{2}(c_{y}^2)_{x}-\frac{1}{2} (c_{x}^2)_x- (c_{x}c_y)_{y})(\mathbf{x}_{i+\frac{1}{2},j}, t),\\
        {\mathcal{L}}_2(U)_{3,i,j+\frac{1}{2}}&\approx 
            \varepsilon(      \frac{1}{2}(c_{x}^2)_{y}-\frac{1}{2} (c_{y}^2)_y    -(c_xc_y)_x)(\mathbf{x}_{i,j+\frac{1}{2}}, t),\\
        {\mathcal{L}}_{3}(U)_{4,i,j}
            &\approx\Delta(\psi'(c) - \frac{\varepsilon}{\rho}  \Delta c)  (\mathbf{x}_{i,j}, t),\\
        {\mathcal{L}}_4(U)_{2,i+\frac{1}{2},j}&\approx
            (\nu(  (v_1)_{xx}+(v_1)_{yy})+(\nu+\lambda)(  (v_1)_{xx}+(v_2)_{xy}))(\mathbf{x}_{i+\frac{1}{2},j}, t),\\
        {\mathcal{L}}_4(U)_{3,i,j+\frac{1}{2}}&\approx
            (\nu((v_2)_{xx}+(v_2)_{yy})  +(\nu+\lambda)(  (v_1)_{xy}+(v_2)_{yy}))(\mathbf{x}_{i,j+\frac{1}{2}}, t).
    \end{split}
\end{equation*}

\subsubsection{Basic Finite Difference Operators}
In this section, we introduce the finite difference operators used to approximate the spatial derivatives of the system \eqref{eq_compressible_chns_2D} on MAC grids.

To approximate the first derivatives at the grid points $\mathbf{x}_{i,j}$, we employ central differences which are second-order accurate at interior points ($1 < i,j < M$), and first-order otherwise satisfying the boundary conditions \eqref{bdry_cond}. 
The resulting discrete derivative operator in one spatial direction can be written in matrix form as
\begin{equation*}
D_M^c = \frac{1}{2h}
\begin{bmatrix}
-1 & 1 & 0 & \dots & 0\\
-1 & 0 & 1 & \dots & 0\\
\vdots & \ddots & \ddots & \ddots & \vdots\\
0 & \dots & -1 & 0 & 1\\
0 & \dots & 0 & -1 & 1
\end{bmatrix} \in \mathbb{R}^{M \times M}.
\end{equation*}

For the dual grid, we define two finite difference matrices $D_M$ and $D_M^\ast$ of size $M \times (M-1)$ that approximate first derivatives at the cell interfaces 
$\mathbf{x}_{i+\frac{1}{2},j}$ or $\mathbf{x}_{i,j+\frac{1}{2}}$ which are second-order accurate at interior points and first-order otherwise. Both matrices incorporate the appropriate boundary conditions \eqref{bdry_cond}:
\begin{equation*}
D_M = \frac{1}{h}
\begin{bmatrix}
1 & 0 & 0 & \dots & 0\\
-1 & 1 & 0 & \dots & 0\\
\cdots & \cdots & \cdots & \cdots & \cdots\\
0 & \dots & 0 & -1 & 1\\
0 & \dots & 0 & 0 & -1
\end{bmatrix}, 
\quad
D_M^{\ast} = \frac{1}{h}
\begin{bmatrix}
2 & 0 & 0 & \dots & 0\\
-1 & 1 & 0 & \dots & 0\\
\cdots & \cdots & \cdots & \cdots & \cdots\\
0 & \dots & 0 & -1 & 1\\
0 & \dots & 0 & 0 & -2
\end{bmatrix}.
\end{equation*}

We also define the averaging matrix $A_M \in \mathbb{R}^{(M-1) \times M}$, which is used to interpolate quantities between the cell centers and the staggered grid:
\begin{equation*}
A_M = \frac{1}{2}
\begin{bmatrix}
1 & 1 & 0 & \dots & 0\\
0 & 1 & 1 & \dots & 0\\
\vdots & \ddots & \ddots & \ddots & \vdots\\
0 & \dots & 0 & 1 & 1
\end{bmatrix}.
\end{equation*}

We introduce $f\ast g = (f_{i,j}g_{i,j})_{i,j}$ for matrices $f$, $g$ in $\mathbb{R}^{n\times m}$.

\subsubsection{The Operators $\mathcal{C}_1$ and $\mathcal{C}_2$}
The convective part of the system is decomposed into two operators,
$$\mathcal{C}(U) = \mathcal{C}_1(U) + \mathcal{C}_2(U),$$
where $\mathcal{C}_1$ acts only on the continuity equation and $\mathcal{C}_2$ on the momentum and Cahn-Hilliard type equations.\\

We consider the fluxes in the $x$- and $y$-directions:
\begin{small}
\begin{equation*}
F(U) =
\begin{bmatrix}
F^{\rho} \\[3pt]
F^{\rho v_1} \\[3pt]
F^{\rho v_2} \\[3pt]
F^{c}
\end{bmatrix}
=
\begin{bmatrix}
\rho v_1 \\[3pt]
\rho v_1^2 + p_1(\rho) \\[3pt]
\rho v_1 v_2 \\[3pt]
\rho v_1 c
\end{bmatrix},
\qquad
G(U) =
\begin{bmatrix}
G^{\rho} \\[3pt]
G^{\rho v_1} \\[3pt]
G^{\rho v_2} \\[3pt]
G^{c}
\end{bmatrix}
=
\begin{bmatrix}
\rho v_2 \\[3pt]
\rho v_1 v_2 \\[3pt]
\rho v_2^2 + p_1(\rho) \\[3pt]
\rho v_2 c
\end{bmatrix}.
\end{equation*}
\end{small}
Let $\hat{F}^{\ast}$ and $\hat{G}^{\ast}$ denote the numerical fluxes associated to $F^{\ast}$ and $G^{\ast}$, respectively.
The convective operators are approximated using numerical flux differences at cell centers and cell interfaces.
Specifically, 
\begin{equation*}
    \begin{split}
        \mathcal{C}_1(U)_{1,i,j} &\approx -\frac{\hat{F}^{\rho}_{i+\frac{1}{2},j} - \hat{F}^{\rho}_{i-\frac{1}{2},j}}{h} -\frac{\hat{G}^{\rho}_{i,j+\frac{1}{2}} - \hat{G}^{\rho}_{i,j-\frac{1}{2}}}{h},\\
        \mathcal{C}_2(U)_{2,i+\frac{1}{2},j} &\approx -\frac{\hat{F}^{\rho v_1}_{i+1,j} - \hat{F}^{\rho v_1}_{i,j}}{h} -\frac{\hat{G}^{\rho v_1}_{i+\frac{1}{2},j+\frac{1}{2}} - \hat{G}^{\rho v_1}_{i+\frac{1}{2},j-\frac{1}{2}}}{h},\\
        \mathcal{C}_2(U)_{3,i,j+\frac{1}{2}} &\approx -\frac{\hat{F}^{\rho v_2}_{i+\frac{1}{2},j+\frac{1}{2}} - \hat{F}^{\rho v_2}_{i-\frac{1}{2},j+\frac{1}{2}}}{h} -\frac{\hat{G}^{\rho v_2}_{i,j+1} - \hat{G}^{\rho v_2}_{i,j}}{h},\\
        \mathcal{C}_2(U)_{4,i,j} &\approx -\frac{\hat{F}^{c}_{i+\frac{1}{2},j} - \hat{F}^{c}_{i-\frac{1}{2},j}}{h} -\frac{\hat{G}^{c}_{i,j+\frac{1}{2}} - \hat{G}^{c}_{i,j-\frac{1}{2}}}{h}.
    \end{split}
\end{equation*}
The numerical fluxes are computed using the Rusanov flux.
For the explicit terms of the operator $\mathcal{C}$ (see Section \ref{section_imex_schemes}), we use WENO5 reconstructions, which are fifth-order accurate for finite difference schemes \cite{BBMZ19a,BBMZ19b,PMP19}.
Let us describe it for the $x$-direction case.
Denote by $\mathcal{W}^{x}:\mathbb{R}^5\longrightarrow\mathbb{R}$ the WENO5 reconstruction operator and a function $f$ such that $f_{i,j} = f(\mathbf{x}_{i,j})$ for indexes $i$, $j$ running both in the primal and dual grids. 
So the right and left state reconstructions are, respectively, for primal grids,
\begin{equation*}
  f^{+}_{i+1,j} = \mathcal{W}^{x}\left(f_{i+3,j},\ldots,f_{i-1,j}\right),
  \quad
  f^{-}_{i,j} = \mathcal{W}^{x}\left(f_{i-2,j},\ldots,f_{i+2,j}\right),
\end{equation*}
and for dual grids,
\begin{align*}
  f^{+}_{i+\frac12,j}
    &= \mathcal{W}^{x}\left(f_{i+\frac52,j},\ldots,f_{i-\frac32,j}\right), &
  f^{-}_{i-\frac12,j}
    &= \mathcal{W}^{x}\left(f_{i-\frac52,j},\ldots,f_{i+\frac32,j}\right), \\[8pt]
  f^{+}_{i+1,j+\frac12}
    &= \mathcal{W}^{x}\left(f_{i+3,j+\frac12},\ldots,f_{i-1,j+\frac12}\right), &
  f^{-}_{i,j+\frac12}
    &= \mathcal{W}^{x}\left(f_{i-2,j+\frac12},\ldots,f_{i+2,j+\frac12}\right).
\end{align*}
Then, for the terms in $\mathcal{C}_1$,
\begin{align*}
    \hat{F}^{\rho}_{i+\frac{1}{2},j} 
    &=
    \frac{(\rho v_1)_{i+1,j} + (\rho v_1)_{i,j}}{2}
    - \frac{\lambda^{\rho}_{i+\frac{1}{2},j}}{2}\left(\rho^{+}_{i+1,j} - \rho^{-}_{i,j}\right)\\[5pt]
    &= (\rho v_1)_{i+\frac{1}{2},j} 
    - \frac{\lambda^{\rho}_{i+\frac{1}{2},j}}{2}\left(\rho^{+}_{i+1,j} - \rho^{-}_{i,j}\right),
\end{align*}
and for the terms in $\mathcal{C}_2$,
\begin{align*}
    &\hat{F}^{\rho v_1}_{i,j} 
    = \frac{1}{2}\left((\rho v_1^2 + p_1(\rho))^{+}_{i+\frac{1}{2},j} + (\rho v_1^2 + p_1(\rho))^{-}_{i-\frac{1}{2},j}\right)
    - \frac{\lambda^{\rho v_1}_{i,j}}{2}\left((\rho v_1)^{+}_{i+\frac{1}{2},j} - (\rho v_1)^{-}_{i-\frac{1}{2},j}\right),\\[11pt]
    &\hat{F}^{\rho v_2}_{i+\frac{1}{2},j+\frac{1}{2}} 
    = \frac{1}{2}\left((\rho v_1 v_2)^{+}_{i+1,j+\frac{1}{2}} + (\rho v_1 v_2)^{-}_{i,j+\frac{1}{2}}\right) 
    - \frac{\lambda^{\rho v_2}_{i+\frac{1}{2},j+\frac{1}{2}}}{2}\left((\rho v_2)^{+}_{i+1,j+\frac{1}{2}} - (\rho v_2)^{-}_{i,j+\frac{1}{2}}\right),\\[11pt]
    &\hat{F}^{c}_{i+\frac{1}{2},j} 
    = \frac{1}{2}\left((\rho c v_1)^{+}_{i+1,j} + (\rho c v_1)^{-}_{i,j}\right)
    - \frac{\lambda^{c}_{i+\frac{1}{2},j}}{2}\left((\rho c)^{+}_{i+1,j} - (\rho c)^{-}_{i,j}\right).
\end{align*}
The numerical viscosities $\lambda^\ast$ are defined as the maximum of the upper bounds of the local characteristic speeds at the reconstructed states of each $\hat{F}^\ast$, namely,
\begin{equation*}
  \begin{split}
    &\lambda^{\rho}_{i+\frac{1}{2},j} = \lambda^{c}_{i+\frac{1}{2},j} =
    \max\left\{
      \left|v^{+}_{1,i+1,j}\right| + s\left(\rho^{+}_{i+1,j}\right),
      \left|v^{-}_{1,i,j}\right| + s\left(\rho^{-}_{i,j}\right)
    \right\},\\[11pt]
    &\lambda^{\rho v_1}_{i,j} = 
    \max\left\{
      \left|v^{+}_{1,i+\frac{1}{2},j}\right| + s\left(\rho^{+}_{i+\frac{1}{2},j}\right),
      \left|v^{-}_{1,i-\frac{1}{2},j}\right| + s\left(\rho^{-}_{i-\frac{1}{2},j}\right)
    \right\},\\[11pt]
    &\lambda^{\rho v_2}_{i+\frac{1}{2},j+\frac{1}{2}} = 
    \max\left\{
      \left|v^{+}_{1,i+1,j+\frac{1}{2}}\right| + s\left(\rho^{+}_{i+1,j+\frac{1}{2}}\right),
      \left|v^{-}_{1,i,j+\frac{1}{2}}\right| + s\left(\rho^{-}_{i,j+\frac{1}{2}}\right)
    \right\},
  \end{split}
\end{equation*}
where $s(f^{\ast}_{i,j}) = \sqrt{p_1^\prime\left(f^\ast_{i,j}\right)}$ is the speed of sound for indexes $i,j$ in the primal and dual grids.

Notice also that the values of $\rho$ on the dual grid and of $v_1$ on the primal grid are required.
To this end, we employ a sixth-order grid transfer operator defined by the coefficients
$$\mu = \frac{1}{256}\begin{bmatrix}3&-25&150&150&-25&3\end{bmatrix}.$$
Thus,
\begin{equation*}
  \rho_{i+\frac{1}{2},j} = \sum_{k=i-3}^{i+2}\mu_{k+4-i}\rho_{k,j},
  \quad 
  v_{1,i,j} = \sum_{k=i-3}^{i+2}\mu_{k+4-i}v_{1,k+\frac{1}{2},j},
\end{equation*}
and similarly in the $y$-direction.
In particular, to evaluate $\hat{F}^{\rho v_2}$, we must approximate the velocity component $v_1$ at staggered locations where it is not directly defined.
So, we first approximate 
$$v_{1,i+\frac{1}{2},j+\frac{1}{2}} = \frac{v_{1,i+\frac{1}{2},j+1} + v_{1,i+\frac{1}{2},j}}{2},$$
and then apply the previous transfer grid operator in the $x$-direction to obtain $v_{1,i,j+\frac{1}{2}}$.

An analogous procedure is used for the flux $\hat{G}^{\rho v_1}$.

\subsubsection{The Operator $\mathcal{L}_1$}
The nonzero components of the operator $\mathcal{L}_1$ are approximated point-wise and taking central finite differences, specifically,
\begin{equation*}
        \mathcal{L}_1(U)_2 \approx D^T_{M}p_2(\rho),\quad \mathcal{L}_1(U)_3 \approx \rho A_M^T g + p_2(\rho)D_{M}.
\end{equation*}

\subsubsection{The Operator $\mathcal{L}_2$}
The operator $\mathcal{L}_2$ involves the derivatives of the order parameter $c$ in the momentum equation.
For the approximation of $\mathcal{L}_2(U)_2$ we use:
\begin{equation*}
    \begin{split}
        (c^2_x)_x(\mathbf{x}_{i+\frac{1}{2},j}) &\approx - (D^T_M(D^c_Mc*D^c_Mc))_{i,j},\\
        (c^2_y)_x(\mathbf{x}_{i+\frac{1}{2},j}) &\approx - (D^T_M((c(D^c_M)^T)*(c(D^c_M)^T)))_{i,j},\\ 
        (c_xc_y)_y(\mathbf{x}_{i+\frac{1}{2},j}) &\approx- (((D^T_McA_M^T)*(A_McD_M))D^T_{M})_{i,j},
    \end{split}
\end{equation*}
for $i= 1,\cdots,M-1$ and $j=1,\cdots,M$. 

Similarly, for the $\mathcal{L}_2(U)_3$ we use:
\begin{equation*}
    \begin{split}
        (c^2_y)_y(\mathbf{x}_{i,j+\frac{1}{2}}) &\approx - (((c(D^c_M)^T)*(c(D^c_M)^T))D_M)_{i,j},\\
        (c^2_x)_y(\mathbf{x}_{i,j+\frac{1}{2}}) &\approx - ((D^c_Mc*D^c_Mc)D_M)_{i,j},\\ 
        (c_xc_y)_x(\mathbf{x}_{i,j+\frac{1}{2}}) &\approx- (D_M((D^T_McA_M^T)*(A_McD_M)))_{i,j},
    \end{split}
\end{equation*}
for $i= 1,\cdots,M$ and $j=1,\cdots,M-1$. 

These approximations satisfy the boundary conditions \eqref{bdry_cond} and are second-order accurate at interior points and first-order accurate otherwise.

\subsubsection{The Operator $\mathcal{L}_3$}\label{section_eyre_splitting}
The operator $\mathcal{L}_3$, which arises from the Cahn-Hilliard type equation, requires a special treatment. 
This is because, for stability, only negative definite terms should be treated implicitly.
However, the term $\Delta\psi^\prime(c) = \operatorname{div}\left(\psi^{\prime\prime}(c)\nabla c\right)$ changes sign in $(-1,1)$ since the potential $\psi$ is of convex-concave type.
To handle this, in \cite{Eyre98} was shown that if $\psi$ is split into the sum of a convex part $\psi_1$ and a concave part $\psi_2$,
the resulting scheme for the Cahn-Hilliard equation treating $\psi^\prime_1$ implicitly and $\psi^\prime_2$ explicitly is unconditionally stable.  
In particular, we choose
$$\psi^\prime_1(c) = 2c \quad\text{and}\quad \psi^\prime_2(c) = c^3-3c.$$

Let $f\in\mathcal{C}^4$ such that $\nabla f(x,y)\cdot\mathbf{n} = 0$ with $f_{i,j} = f(\mathbf{x}_{i,j})$ for $i,j=1,\cdots, M$.
We use a second-order accurate approximation for $\Delta f_{i,j} \approx \Delta_h f_{i,j} = \Delta_{x,h}f_{i,j} + \Delta_{y,h}f_{i,j}$ where
\begin{equation*}
    \Delta_{x,h}f_{i,j} =
    \begin{cases}
        \frac{f_{i+1,j}-f_{i,j}}{h^2},&\mbox{if }i=1,\\[5pt]
        \frac{f_{i+1,j}-2f_{i,j} + f_{i-1,j}}{h^2},&\mbox{if } 1<i<M,\\[5pt]
        \frac{f_{i-1,j}-f_{i,j}}{h^2},&\mbox{if }i=M,
    \end{cases}
\end{equation*}
and similarly for $\Delta_{y,h}$. 
For $i,j=1,\cdots,M$, yields $$(\Delta\psi^\prime_1(c))(\mathbf{x}_{i,j}) \approx 2\Delta_h c_{i,j},$$
and $(\Delta\psi^\prime_2(c))(\mathbf{x}_{i,j})\approx(\psi^{\prime\prime}_2(c)c_x)_{x}(\mathbf{x}_{i,j}) + (\psi_2^{\prime\prime}(c)c_y)_{y}(\mathbf{x}_{i,j})$ where 
\begin{equation}\label{psi_2_xx}
    (\psi_2^{\prime\prime}(c)c_x)_{x}(\mathbf{x}_{i,j}) \approx
    \begin{cases}
        \frac{(\psi_2^{\prime\prime}(c_{i+1,j})+\psi_2^{\prime\prime}(c_{i,j}))(c_{i+1,j}-c_{i,j})}{2h^2}&i=1,\\[5pt]
        \frac{(\psi_2^{\prime\prime}(c_{i+1,j})+\psi_2^{\prime\prime}(c_{i,j}))(c_{i+1,j}-c_{i,j})-(\psi_2^{\prime\prime}(c_{i,j})+\psi_2^{\prime\prime}(c_{i-1,j}))(c_{i,j}-c_{i-1,j})}{2h^2}&1<i<M,\\[5pt]
        \frac{-(\psi_2^{\prime\prime}(c_{i,j})+\psi_2^{\prime\prime}(c_{i-1,j}))(c_{i,j}-c_{i-1,j})}{2h^2}&i=M,\\
    \end{cases}
\end{equation}
and 
\begin{equation}\label{psi_2_yy}
    (\psi_2^{\prime\prime}(c)c_y)_{y}(\mathbf{x}_{i,j}) \approx
    \begin{cases}
        \frac{(\psi_2^{\prime\prime}(c_{i,j+1})+\psi_2^{\prime\prime}(c_{i,j}))(c_{i,j+1}-c_{i,j})}{2h^2}&j=1,\\[5pt]
        \frac{(\psi_2^{\prime\prime}(c_{i,j+1})+\psi_2^{\prime\prime}(c_{i,j}))(c_{i,j+1}-c_{i,j})-(\psi_2^{\prime\prime}(c_{i,j})+\psi_2^{\prime\prime}(c_{i,j-1}))(c_{i,j}-c_{i,j-1})}{2h^2}&1<j<M,\\[5pt]
        \frac{-(\psi_2^{\prime\prime}(c_{i,j})+\psi_2^{\prime\prime}(c_{i,j-1}))(c_{i,j}-c_{i,j-1})}{2h^2}&j=M,\\    
    \end{cases}
\end{equation}

Now, it only remains to approximate $\Delta\left(\frac{1}{\rho}\Delta c\right)$. 
To this end, we employ the aforementioned second-order accurate approximation for the laplacian, namely,
$$\Delta\left(\frac{1}{\rho}\Delta c\right)(\mathbf{x}_{i,j}) \approx \left(\Delta_hD(\rho)^{-1}\Delta_h c\right)_{i,j},$$
where $D$ is the diagonal operator on $M\times M$ matrices defined as
$$(D(v) w)_{i,j} = v_{i,j} w_{i,j}, \quad i,j = 1,\dots,M, \quad v,w \in \mathbb{R}^{M\times M}.$$

\subsubsection{The Operator $\mathcal{L}_4$}
The operator $\mathcal{L}_4$ stores the derivatives of the velocity field in the balance of momentum. 
For approximating the pure double derivatives, for instance, $(v_1)_{xx}$ and $(v_1)_{yy}$ at $\mathbf{x}_{i+\frac{1}{2}, j}$, we use
\begin{equation*}
    \left(v_1\right)_{xx}\left(\mathbf{x}_{i+\frac{1}{2}, j}\right) =
    \begin{cases}
        \displaystyle\frac{v_{1,i+\frac{3}{2},j} - 2v_{1,i+\frac{1}{2},j}}{h^2},&\mbox{for } i=1,\\[11pt]
        \displaystyle\frac{v_{1,i+\frac{3}{2},j} - 2v_{1,i+\frac{1}{2},j} + v_{1,i-\frac{1}{2},j}}{h^2},&\mbox{for } 1<i<M-1,\\[11pt]
        \displaystyle\frac{v_{1,i-\frac{1}{2},j} - 2v_{1,i+\frac{1}{2},j}}{h^2},&\mbox{for } i=M-1,
    \end{cases}
\end{equation*}
for $j = 1,\cdots, M$, and
\begin{equation*}
    \left(v_1\right)_{yy}\left(\mathbf{x}_{i+\frac{1}{2}, j}\right) =
    \begin{cases}
        \displaystyle\frac{v_{1,i+\frac{1}{2},j+1} - 3v_{1,i+\frac{1}{2},j}}{h^2},&\mbox{for } j=1,\\[11pt]
        \displaystyle\frac{v_{1,i+\frac{1}{2},j+1} - 2v_{1,i+\frac{1}{2},j} + v_{1,i+\frac{1}{2},j-1}}{h^2},&\mbox{for } 1<j<M,\\[11pt]
        \displaystyle\frac{v_{1,i+\frac{1}{2},j-1} - 3v_{1,i+\frac{1}{2},j}}{h^2},&\mbox{for } j=M,
    \end{cases}
\end{equation*}
for $i = 1,\cdots, M-1$.  
The approximation of the cross derivative, e.g., $(v_2)_{xy}$ at $\mathbf{x}_{i+\frac{1}{2},j}$ is given by
\begin{equation*}
    \left(v_2\right)_{xy}\left(\mathbf{x}_{i+\frac{1}{2}, j}\right) =
    \begin{cases}
        \displaystyle\frac{v_{2,i+1, j+\frac{1}{2}} - v_{2,i, j+\frac{1}{2}}}{h^2},&\mbox{for } j=1,\\[11pt]
        \displaystyle\frac{(v_{2,i+1, j+\frac{1}{2}} - v_{2,i, j+\frac{1}{2}}) - (v_{2,i+1, j-\frac{1}{2}} - v_{2,i, j-\frac{1}{2}})}{h^2},&\mbox{for } 1<j<M,\\[11pt]
        \displaystyle\frac{v_{2,i+1, j-\frac{1}{2}} - v_{2,i, j-\frac{1}{2}}}{h^2},&\mbox{for } j=M,
    \end{cases}
\end{equation*}
for $i = 1,\cdots, M-1$. 

The three expressions above verify the boundary conditions \eqref{bdry_cond} and are second-order accurate at its respective interior points and first-order accurate otherwise.

In matrix form, 
\begin{equation}\label{L_4_2}
    \mathcal{L}_4(U)_2\approx -((2\nu + \lambda)D_M^TD_Mv_1 + \nu v_1(D^{\ast}_{M+1})^TD_{M+1} + (\nu + \lambda)D^T_{M}v_2D^T_M).
\end{equation}
Similarly, the other nonzero component of $\mathcal{L}_4$ takes the form
\begin{equation}\label{L_4_3}
    \mathcal{L}_4(U)_3\approx -((2\nu + \lambda)v_2D_M^TD_M + \nu D^T_{M+1}D^{\ast}_{M+1}v_2 + (\nu + \lambda)D_{M}v_1D_M).
\end{equation}

\subsection{Vector Implementation}

In this section, we reformulate system \eqref{semi_discrete_system} in vector form for the two-dimensional case. 
The one-dimensional case follows analogously and is therefore omitted.\\

Let $\operatorname{vec}(A)$ denote the column-wise vectorization of a matrix $A\in\mathbb{R}^{n\times m}$, defined by
$$\operatorname{vec}(A)_{i+m(j-1)}=A_{i,j},\quad\text{for}\quad 1\leq i\leq n,\enspace 1\leq j\leq m.$$ 
For simplicity, we will use the same symbols $\varrho$, $V_1$, $V_2$, $C$, $\mathcal{C}_k(U)_i$ and $\mathcal{L}_j(U)_i$ (for $i,j=1,\cdots,4$ and $k=1,2$) to denote both the original matrices and their vectorizations, whenever there is no risk of confusion.

Let $\otimes$ denotes the Kronecker product and $I_n$ the identity matrix of size $n$.
With this notation, the nonzero blocks of $\mathcal{C}_1$ can be expressed in vector form as
\begin{equation*}
        \mathcal{C}_1(U)_1 =
            (I_M\otimes D_M)(\varrho_{*,x}*V_1) + (D_M\otimes I_M)(\varrho_{*,y}*V_2)+ h\mathcal{A}_1\varrho,
\end{equation*}
with
$$\mathcal{A}_1(U)=(I_M\otimes D_M)\Lambda^x(I_M\otimes D_M^T) + (D_M\otimes I_M)\Lambda^y(D_M^T\otimes I_M),$$
where $\Lambda^x$ and $\Lambda^y$ denote the diagonal matrices of the maximum characteristic speeds associated with the fluxes $F^{\rho}$ and $G^{\rho}$ in the $x$- and $y$-directions, respectively, evaluated at the reconstructed states.
Similarly, the nonzero blocks of the operators $\mathcal{L}_1$, $\mathcal{L}_3$, $\mathcal{L}_4$ can be written as
\begin{equation*}
  \begin{split}
    \mathcal{L}_1(U)_2 &= (I_M\otimes D_M^T)p_2(\varrho),\\
    \mathcal{L}_1(U)_3 &= (A_M\otimes I_M)g\varrho + (D_M^T\otimes I_M)p_2(\varrho),\\
    \mathcal{L}_3(U)_4 &= 2\Delta_h C + \mathcal{A}_3(C)C - \Delta_h(D(\varrho^{-1})\Delta_h C),
  \end{split}
\end{equation*}
where $\mathcal{A}_3$ is the tensor constructed form the values of $\psi^{\prime\prime}_2$ in \eqref{psi_2_xx}-\eqref{psi_2_yy} and $\Delta_h$ the Laplacian operator in tensor form.
The nonzero blocks of $\mathcal{L}_4$ are the following:
\begin{equation*}
    \mathcal{L}_4(U)_2 = - A_{1,1} V_1 - A_{1,2} V_2,
    \quad
    \mathcal{L}_4(U)_3 = - A_{2,1} V_1 - A_{2,2} V_2,
\end{equation*}
where the matrices $A_{i,j}$ are given by
\begin{equation}\label{vel_matrices}
    \begin{split}
        A_{1,1} &=(2\nu+\lambda)I_M\otimes (D^T_MD_M)+\nu(D_{M+1}^T D^{\ast}_{M+1}) \otimes I_{M-1},\\
        A_{1,2} &=(\nu+\lambda)D_M \otimes D_M^T,\\
        A_{2,1} &=(\nu+\lambda)D_M^T \otimes D_M,\\
        A_{2,2} &=(2\nu+\lambda)(D_M^T D_M) \otimes I_M + \nu I_{M-1} \otimes (D_{M+1}^T D^{\ast}_{M+1}).\\
    \end{split}
\end{equation}
Using this notation, system \eqref{semi_discrete_system} can be expressed compactly in vector form by defining
\begin{equation*}
    U = 
    \begin{bmatrix}
        \varrho \\ \varrho_{*,x} * V_1 \\ \varrho_{*,y} * V_2 \\ \varrho * C
    \end{bmatrix},
    \quad
    \text{and}
    \quad
    \mathcal{L}(U) = 
    \mathcal{C}(U)
    + 
    \sum_{j=1}^4
    \begin{bmatrix} 
        \mathcal{L}_j(U)_1 \\ 
        \mathcal{L}_j(U)_2 \\ 
        \mathcal{L}_j(U)_3 \\ 
        \mathcal{L}_j(U)_4
    \end{bmatrix}.
\end{equation*}  
    
\subsection{Implicit-Explicit Schemes}\label{section_imex_schemes}

To construct an implicit-explicit scheme, we employ the technique of doubling variables combined with a partitioned Runge-Kutta approach \cite{BBMRV15,PR05,MMY25,mulet_24}.
Consider a sufficiently smooth function 
$$\tilde{\mathcal{L}}:\mathbb{R}^N\times\mathbb{R}^N\longrightarrow\mathbb{R},$$ 
defined as
$$\tilde{\mathcal{L}}(\tilde{U}, U) = \mathcal{C}_1(\tilde{U},U) + \mathcal{C}_2(\tilde{U}) + \tilde{\mathcal{L}}_1(\tilde{U},U) + \mathcal{L}_2(\tilde{U}) + \tilde{\mathcal{L}}_3(\tilde{U}, U) + \mathcal{L}_4(U),$$
where the only nonzero component of the operators $\tilde{\mathcal{L}}_1$ and $\tilde{\mathcal{L}}_3$ are given by
\begin{equation*}
    \begin{split}
        \tilde{\mathcal{C}}_1(\tilde{U}, U)_{1} &= (I_M\otimes D_M)(\varrho_{*,x}*V_1) + (D_M\otimes I_M)(\varrho_{*,y}*V_2)+ h\mathcal{A}_1(\tilde{U})\tilde{\varrho},\\
        \tilde{\mathcal{L}}_1(\tilde{U}, U)_{2} &= (I_M\otimes D_M^T)p_2(\varrho),\\
        \tilde{\mathcal{L}}_1(\tilde{U}, U)_{3} &= (A_M\otimes I_M)g\tilde{\varrho} + (D_M^T\otimes I_M)p_2(\varrho),\\
        \tilde{\mathcal{L}}_3(\tilde{U}, U)_{4} &= 2\Delta_h C + \mathcal{A}_3(\tilde{C})\tilde{C} - \Delta_h(D(\varrho^{-1})\Delta_h C).
    \end{split}
\end{equation*}
Using this operator, the full discrete scheme given by \eqref{semi_discrete_system} can be written as
\begin{equation}\label{discrete_ODE}
	\begin{split}
		& U^\prime = \tilde{\mathcal{L}}(U, U),\\
		& U(0) = U_0,
	\end{split}
\end{equation}
which is equivalent to 
\begin{equation}\label{partitioned_RK}
	\begin{split}
		\tilde{U}^\prime &= \tilde{\mathcal{L}}(\tilde{U}, U),\\
		U^\prime &= \tilde{\mathcal{L}}(\tilde{U}, U),\\
		\tilde{U}(0) &= U(0) = U_0.
	\end{split}
\end{equation}
Here, all terms involving $\tilde{U}$ are treated explicitly, while those depending on $U$ are handled implicitly.
System \eqref{partitioned_RK} allows us to apply separate Runge-Kutta schemes to the explicit and the implicit parts.
Therefore, we consider a pair of Butcher tableaus with $s$ stages:
\begin{equation*}
	\begin{array}{c|c}
		\tilde{\gamma} & \tilde{\alpha} \\
		\hline
		&\tilde{\beta}^T
	\end{array},
	\qquad
	\begin{array}{c|c}
		\gamma & \alpha \\
		\hline
		&\beta^T
	\end{array}.
\end{equation*}
The first tableau defines the explicit part of the scheme with $\tilde{\alpha}_{i,j} = 0$ for all $j\geq i$, while the second tableau represents the diagonally implicit part, where $\alpha_{i,j} = 0$ for $j>i$ and $\alpha_{i,i}\neq0$. 
The $\gamma_i$ and $\tilde{\gamma}_i$ coefficients are defined by 
$$\gamma_i = \sum_{j=1}^{i}\alpha_{i,j},\quad\text{and}\quad\tilde{\gamma}_i = \sum_{j=1}^{i-1}\tilde{\alpha}_{i,j}.$$
Using these tableaus, the stage values of the partitioned Runge-Kutta method applied to \eqref{partitioned_RK} are computed as follows:
\begin{equation*}
    \begin{split}
        \tilde{U}^{(i)} &= \tilde{U}^n + \Delta t\sum_{j<i}\tilde{\alpha}_{i, j}\tilde{\mathcal{L}}(\tilde{U}^{(j)}, U^{(j)}),\\[2pt]
		U^{(i)} &= U^n + \Delta t\sum_{j<i}\alpha_{i, j}\tilde{\mathcal{L}}(\tilde{U}^{(j)}, U^{(j)}) + \Delta t\alpha_{i, i}\tilde{\mathcal{L}}(\tilde{U}^{(i)}, U^{(i)}),\\[2pt]
        \tilde{U}^{n+1} &= \tilde{U}^n + \Delta t\sum_{j=1}^s\tilde{\beta}_{j}\tilde{\mathcal{L}}(\tilde{U}^{(j)}, U^{(j)}),\\[2pt]
		U^{n+1} &= U^n + \Delta t\sum_{j=1}^s\beta_{j}\tilde{\mathcal{L}}(\tilde{U}^{(j)}, U^{(j)}),
    \end{split}
\end{equation*}
If $\beta=\tilde{\beta}$ and $U^n=\tilde{U}^n$, then both solutions remain identical at every time step, which eliminates the need of doubling the number of variables.
In addition, as proven in \cite{PR05}, if both Butcher tableaus are second-order accurate, the resulting partitioned Runge-Kutta method is also second-order accurate.
Consequently, the final scheme is given by
\begin{equation}\label{PRK_scheme}
    \begin{split}
        \tilde{U}^{(i)} &= U^n + \Delta t\sum_{j<i}\tilde{\alpha}_{i, j}\tilde{\mathcal{L}}(\tilde{U}^{(j)}, U^{(j)}),\\[2pt]
        U^{(i)} &= U^n + \Delta t\sum_{j<i}\alpha_{i, j}\tilde{\mathcal{L}}(\tilde{U}^{(j)}, U^{(j)}) + \Delta t\alpha_{i, i}\tilde{\mathcal{L}}(\tilde{U}^{(i)}, U^{(i)}),\\[2pt]
        U^{n+1} &= U^n + \Delta t\sum_{j=1}^s\beta_{j}\tilde{\mathcal{L}}(\tilde{U}^{(j)}, U^{(j)}).
    \end{split}
\end{equation}

Henceforth, we restrict our analysis to Stiffly Accurate Runge-Kutta schemes, that is, those satisfying $\alpha_{s,j}=\beta_j$ for $j=1,\ldots,s$.

\subsection{Solution to the Nonlinear Systems}
At each intermediate stage $i=1,\cdots,s$, the scheme \eqref{PRK_scheme} reduces to solving the following nonlinear system for $U^{(i)}$:
\begin{equation}\label{nonlinear_eq}
    U^{(i)} = U^n + \Delta t\sum_{j<i}\alpha_{i, j}\tilde{\mathcal{L}}(\tilde{U}^{(j)}, U^{(j)}) + \Delta t\alpha_{i, i}\tilde{\mathcal{L}}(\tilde{U}^{(i)}, U^{(i)}),
\end{equation}
which is composed by two subsystems: a nonlinear system for the density and the velocities, and then a linear system for the $c$ variable, following the approach described in \cite{MMY25,mulet_24}.\\

For the nonlinear subsystem one has to solve $M^2 + 2M(M-1)$ equations for equal number of unknowns, corresponding to $\varrho$, $V_1$ and $V_2$. 
The system reads as: 
\begin{equation}\label{nonlinear_subsystem}
    \begin{aligned}
        \varrho^{(i)} - \widehat{\varrho^{(i)}} + 
            \Delta t\alpha_{i,i}\left((I_M\otimes D_M)(\varrho_{*,x}*V_1)^{(i)} + (D_M\otimes I_M)(\varrho_{*,y}*V_2)^{(i)}\right) &= 0,\\[8pt]
        (\varrho_{*,x}*V_1)^{(i)} - \widehat{(\varrho_{*,x}*V_1)^{(i)}} + \Delta t\alpha_{i,i}\left( A_{1,1}V_1^{(i)} + A_{1,2}V_2^{(i)} - (I_M\otimes D_M^T)p_2(\varrho^{(i)})\right) &= 0, \\[8pt]
        (\varrho_{*,y}*V_2)^{(i)} - \widehat{(\varrho_{*,y}*V_2)^{(i)}} + \Delta t\alpha_{i,i}\left( A_{2,1}V_1^{(i)} + A_{2,2}V_2^{(i)} - (D^T_M\otimes I_M)p_2(\varrho^{(i)})\right) &= 0, \\[8pt]
    \end{aligned}
\end{equation}
where the terms marked with a hat $\widehat{\cdot}$ are explicitly computed at the current stage $i=1,\cdots,s$. 

Once $\varrho^{(i)}$, $V_1^{(i)}$ and $V_2^{(i)}$ have been computed, the remaining step to solve in \eqref{nonlinear_eq} is the linear system for $C^{(i)}$.
The system takes the form,
\begin{equation*}
    (\varrho\ast C)^{(i)} -\widehat{(\varrho\ast C)^{(i)}} + \Delta t \alpha_{i,i}\left(\varepsilon\Delta_h\left(\frac{1}{\varrho^{(i)}}\Delta_h C^{(i)}\right) - 2\Delta_h C^{(i)}\right) = 0,
\end{equation*}
which is equivalent to solving the following linear system for $C^{(i)}$
\begin{equation}\label{system_c}
    \left(D(\varrho^{(i)}) - 2\Delta t\alpha_{i,i}\Delta_h + \Delta t\alpha_{i,i}\varepsilon\Delta_hD(\varrho^{(i)})^{-1}\Delta_h\right)C^{(i)} = \widehat{(\varrho\ast C)^{(i)}}.
\end{equation}
Due to the convex splitting stated in Section \ref{section_eyre_splitting}, the coefficient matrix is symmetric and positive definite, provided that $\varrho_{k}^{(i)}>0$ for all $k=1,\cdots,M^2$.

\subsection{Nonlinear Solvers}

For the nonlinear subsystem \eqref{nonlinear_subsystem} the damped Newton's method is employed \cite{BIMV19}.
Dropping the superscript of the $i$-stage, the nonlinear system \eqref{nonlinear_subsystem} expressed in compact form is
$$H(z)\equiv L(z)+ \Delta t\alpha_{i,i}\mathcal{D}(z) - r = 0,$$
where 
\begin{equation*}
    \mathbf{z} = 
    \begin{bmatrix}
        \varrho\\ V_1\\ V_2
    \end{bmatrix},
    \quad
    L(\mathbf{z}) = 
    \begin{bmatrix}
        \varrho\\ \varrho_{*,x}*V_1\\ \varrho_{*,y}*V_2
    \end{bmatrix},
    \quad
    r = 
    \begin{bmatrix}
        \widehat{\varrho}\\ \widehat{\varrho_{*,x}*V_1}\\ \widehat{\varrho_{*,y}*V_2}
    \end{bmatrix},
\end{equation*}
and the nonlinear operator is
\begin{equation*}
    \mathcal{D}(\mathbf{z}) =
    \begin{bmatrix}
        (I_M\otimes D_M)(\varrho_{*,x}*V_1) + (D_M\otimes I_M)(\varrho_{*,y}*V_2)\\
        A_{1,1}V_1 + A_{1,2}V_2 - (I_M\otimes D^T_M)p_2(\varrho)\\
        A_{2,1}V_1 + A_{2,2}V_2 - (D^T_M\otimes I_M)p_2(\varrho)
    \end{bmatrix}.
\end{equation*}
Then at each Newton iteration the solution is updated as 
$$z^{n+1} = z^n+ \alpha_n\delta^n,$$
where the step $\delta^n$ is computed by solving the linear system 
\begin{equation}\label{newton_linear_system}
  H^\prime(z^n)\delta^n = -H(z^n).
\end{equation}
Here, $H^\prime(z^n)$ denotes the Jacobian matrix of $H$ evaluated at the current iterate $z^n$, and the damping parameter $\alpha_n\in(0,1]$ is chosen to ensure that $||H(z^{n+1})||_2$ is decreasing.
The Jacobian matrix has the form
\begin{equation}\label{jacobian_matrix}
    H^\prime(\mathbf{z}) =
    L^\prime(z) + \Delta t\alpha_{i,i}\mathcal{D}^\prime(z) =
    \begin{bmatrix}
        \begin{array}{c|cc}
        I_{M^2} & 0_{M^2} \ 0_{M^2} \\ 
        \hline
        D_V     & D_\varrho
        \end{array}
    \end{bmatrix}
    + \Delta t\alpha_{i,i}
    \begin{bmatrix}
        \begin{array}{c|c}
        A_V &  A_\varrho\\
        \hline
        A_{p_2}
        & B
        \end{array}
    \end{bmatrix},
\end{equation}
where $0_{M^2}$ denotes the $M^2$ zero matrix,
\begin{align*}
    D_V &= 
    \begin{bmatrix}
        D(V_1)(I_M\otimes A_M)\\[5pt] D(V_2)(A_M\otimes I_M)
    \end{bmatrix},&
    \quad
    A_\varrho &=
    \begin{bmatrix}
        (I_M\otimes D_M)D(\varrho_{*,x}) \\[5pt] (D_M\otimes I_M)D(\varrho_{*,y})
    \end{bmatrix}^T, &
    \\[11pt]
    D_\varrho &= 
    \begin{bmatrix}
        D(\varrho_{*,x}) & 0_{M^2} \\[5pt]
        0_{M^2}      & D(\varrho_{*,y})
    \end{bmatrix},&
    \quad
    B &= 
    \begin{bmatrix}
        A_{1,1} & A_{1,2} \\[5pt]
        A_{2,1} & A_{2,2}
    \end{bmatrix},&
    \\[11pt]
    A_{p_2} &= -
    \begin{bmatrix}
        (I_M\otimes D^T_M)D(p^\prime_2)\\[5pt] (D^T_M\otimes I_M)D(p^\prime_2)
    \end{bmatrix},&
\end{align*}
and
$$A_V = (I_M\otimes D_M)D(V_1)(I_M\otimes A_M) + (D_M\otimes I_M)D(V_2)(A_M\otimes D_M).$$

For analyzing the invertibility of the Jacobian matrix \eqref{jacobian_matrix} the following result, proven in \cite{MMY25}, is needed.
\begin{proposition}
    If $\varrho_k>0$ for every $k=1,\ldots,M^2$, and $\nu,\lambda>0$, then $D_\varrho + \Delta t\alpha_{i,i}B$ is symmetric and strictly positive definite.
\end{proposition}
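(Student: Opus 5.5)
The plan is to show that $D_\varrho$ is symmetric positive definite, that $B$ is symmetric positive semidefinite (in fact definite), and then add them using $\Delta t\alpha_{i,i}>0$.

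\textbf{Step 1: the block $D_\varrho$.} It is diagonal, with entries $\varrho_{*,x}$ and $\varrho_{*,y}$. These are averages $\tfrac12(\varrho_{i+1,j}+\varrho_{i,j})$ of the positive cell densities, so they are positive. Hence $D_\varrho$ is symmetric positive definite.

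\textbf{Step 2: symmetry of $B$.} For the diagonal blocks, $D_M^TD_M$ is symmetric. The one-dimensional operator $D_{M+1}^TD^\ast_{M+1}$ is the tridiagonal Dirichlet-type Laplacian with boundary diagonal entries $3$ (consistent with the $-3v$ stencil), and it is symmetric. So $A_{1,1}$ and $A_{2,2}$ are Kronecker products of symmetric matrices, hence symmetric. For the off-diagonal blocks,
\[
A_{1,2}^T=(\nu+\lambda)(D_M\otimes D_M^T)^T=(\nu+\lambda)D_M^T\otimes D_M=A_{2,1},
\]
so $B$ is symmetric.

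\textbf{Step 3: positivity of $B$ via a quadratic form.} I will show $w^TBw>0$ for $w=(V_1,V_2)\ne0$, splitting $B=\nu B_\nu+(\nu+\lambda)B_{\nu+\lambda}$. Here
\[
B_{\nu+\lambda}=\begin{bmatrix} I\otimes D_M^TD_M & D_M\otimes D_M^T\\ D_M^T\otimes D_M & D_M^TD_M\otimes I\end{bmatrix}=G^TG,
\qquad G=\begin{bmatrix} I_M\otimes D_M & D_M\otimes I_M\end{bmatrix},
\]
so $G$ is the discrete divergence and $w^TB_{\nu+\lambda}w=\|GV\|^2\ge0$. The cross terms use the mixed-product rule, e.g. $(I\otimes D_M)^T(D_M\otimes I)=D_M\otimes D_M^T$; the ordering convention of $\operatorname{vec}$ has to be matched to the paper's, and this index bookkeeping is the main technical nuisance. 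The remainder is
\[
\nu B_\nu=\nu\,\mathrm{diag}\bigl(I\otimes D_M^TD_M+(D_{M+1}^TD^\ast_{M+1})\otimes I,\ D_M^TD_M\otimes I+I\otimes D_{M+1}^TD^\ast_{M+1}\bigr).
\]
The matrix $D_M^TD_M$ is the $(M-1)\times(M-1)$ Dirichlet Laplacian, which is positive definite because $D_M$ has full column rank. The matrix $D_{M+1}^TD^\ast_{M+1}$ is positive definite: it equals $E^TE$ plus a positive diagonal boundary correction, or one can invoke Gershgorin together with irreducible diagonal dominance coming from the boundary rows with diagonal $3$. A Kronecker sum of positive definite matrices is positive definite, so $B_\nu$ is positive definite. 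With $\nu>0$ and $\nu+\lambda>0$, $B$ is therefore positive definite.

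\textbf{Step 4: conclusion.} Since $\Delta t\alpha_{i,i}>0$, which holds for the DIRK tableaux considered, $D_\varrho+\Delta t\alpha_{i,i}B$ is symmetric and the sum of a positive definite and a positive definite matrix, hence strictly positive definite.

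The main obstacle is Step 3: verifying exactly that the cross-derivative blocks assemble into $G^TG$ with the same $D_M$ used in the pure-derivative part. If the discretization does not factor that cleanly, I would fall back on showing that $B_\nu$'s smallest eigenvalue dominates the indefinite remainder. That is less clean, so I would first try the exact factorization.
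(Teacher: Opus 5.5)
Your proof is correct. The paper does not prove this proposition itself; it defers to \cite{MMY25}, so there is no in-text argument to compare with. Your splitting
\[
B=(\nu+\lambda)\,G^TG+\nu\begin{bmatrix} I_M\otimes D_M^TD_M+(D_{M+1}^TD^\ast_{M+1})\otimes I_{M-1} & 0\\ 0 & D_M^TD_M\otimes I_M+I_{M-1}\otimes D_{M+1}^TD^\ast_{M+1}\end{bmatrix},\qquad G=\begin{bmatrix} I_M\otimes D_M & D_M\otimes I_M\end{bmatrix},
\]
is a natural self-contained route. It is the discrete analogue of $\int \nu|\nabla \mathbf{v}|^2+(\nu+\lambda)|\operatorname{div}\mathbf{v}|^2$.

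The step you flagged as the main obstacle is a pure matrix identity. By the mixed-product rule, $(I_M\otimes D_M)^T(D_M\otimes I_M)=D_M\otimes D_M^T$ and $(D_M\otimes I_M)^T(I_M\otimes D_M)=D_M^T\otimes D_M$. These are exactly $A_{1,2}$ and $A_{2,1}$ divided by $\nu+\lambda$. The diagonal contributions add up to $(\nu+\lambda)+\nu=2\nu+\lambda$. No knowledge of the vec ordering is needed, so the fallback is unnecessary.

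For the one-dimensional factor, there is a quicker argument than Gershgorin. Note that $D^\ast_{M+1}=SD_{M+1}$ with $S=\operatorname{diag}(2,1,\dots,1,2)$. Hence $D_{M+1}^TD^\ast_{M+1}=D_{M+1}^TSD_{M+1}$ is symmetric positive definite in one line, because $D_{M+1}$ has full column rank.

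Two remarks on hypotheses:
\begin{itemize}
\item Since $D_\varrho$ is already positive definite, you only need $B$ to be positive semidefinite. So $\nu\ge0$ and $\nu+\lambda\ge0$ would suffice.
\item The sign condition $\alpha_{i,i}\ge 0$, which the statement leaves implicit, is genuinely needed. For $\alpha_{i,i}<0$ the matrix becomes indefinite once $\Delta t$ is large. You were right to make it explicit.
\end{itemize}
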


Consequently, assuming that $\varrho_k>0$ for every $k=1,\ldots,M^2$, the Jacobian matrix \eqref{jacobian_matrix} is invertible provided that 
\begin{equation*}
    \det\left(
        I_{M^2} + \Delta t\alpha_{i,i}\left(
            A_V - 
            A_\varrho(D_\varrho + \Delta t\alpha_{i,i}B)^{-1}(D_V+\Delta t\alpha_{i,i}A_{p_2})\right)
    \right) \neq 0.
\end{equation*}
Clearly, if $\Delta t\alpha_{i,i}$ were zero, the above condition holds.
Hence, for sufficiently small values of $\Delta t\alpha_{i,i}$ the Jacobian matrix $H^\prime$ is invertible.\\

In \cite{MMY25,mulet_24}, multigrid V-cycle algorithm with a small number of pre- and post- Gauss-Seidel smoothings was proven to be effective for solving system \eqref{system_c}.
In particular, this approach was successfully applied in \cite{MMY25,mulet_24} to the system formed by the sub-block of the Jacobian matrix \eqref{jacobian_matrix} given by
\begin{equation*}
    D_\varrho + \Delta t\alpha_{i,i}B.
\end{equation*}
The analysis of linear solvers for the complete Jacobian matrix \eqref{jacobian_matrix} for approximating the solution of system \eqref{newton_linear_system} is beyond the scope of this work.

\subsection{Time-Step selection}\label{section_time_step}

The time step is chosen based only on the convective part of the system.
It follows that the CFL stability condition for the proposed scheme takes the form
$$\Delta t = \text{CFL}^* \cdot \frac{\Delta x}{cs},$$ 
where $\text{CFL}^*$ is some constant less than one, and $cs$ denotes the maximum of characteristic speeds, computed as
\begin{equation}\label{time_step_cond}
	cs = \max \left\{ \left| V^{(i)}_{k,j} \right| + \sqrt{p^\prime_1(\varrho^{(i)}_j)} : i = 1,\cdots,s,\enspace k = 1, 2,\enspace j=1,\cdots,M^2\right\}.
\end{equation}
Since the stiff pressure component is treated implicitly, it does not influence the stability condition.
As a result, the time step $\Delta t$ is independent of the parameter $C_{p,2}$, depending only on the non-stiff pressure part $p_1$ and the velocity field $\mathbf{v}$.
This allows the method to avoid severe time-step restrictions typically faced on low Mach number regimes.

On the other hand, the proposed scheme is not guaranteed to be bound-preserving: the density can be negative or the order parameter $c$ can be outside the physical interval $[-1,1]$. 
As discussed in the monograph \cite{miranville}, the polynomial double-well potential \eqref{eq_poly_double_well} does not satisfy the maximum principle. 
Consequently, one cannot expect the discrete approximation of $c$ to remain strictly within $[-1,1]$.
Despite such limitation, this potential is widely used in the literature due to its simplicity \cite{Boyer99,DhaouadiDumbserGavrilyuk25,MMY25,mulet_24}, but 
it can be replaced by the classical logarithmic potentials described in \cite{AbelsFeireisl08,Boyer99,DhaouadiDumbserGavrilyuk25,miranville} and references therein.

Nevertheless, in our test the scheme preserves the positivity of the density and keeps $c$ almost within bounds, up to a small deviation.
Alternative techniques to mitigate these issues are presented in \cite{MMY25,mulet_24}, where the time step is reduced whenever $|c|$ exceeds a predefined threshold, and then it is gradually increased back.

\section{Asymptotic Preserving Property}\label{section_AP}

In \cite{abels_liu_necasova_24} it was proven that in the low Mach number limit of the compressible Cahn-Hilliard-Navier-Stokes system converges to its incompressible counterpart, under suitable initial conditions. 

The aim of this section, is to show that the proposed scheme is asymptotically stable. 
To formalize this notion, we recall the definition of an asymptotic preserving scheme provided in \cite{cordier_degond_kumbaro_12,degond_tang_11,Haack-Jin-Liu2012,NBALMM14}.  

\begin{definition}
    \em{Let \(\mathcal{M}^\delta\) be a continuous physical model depending on a perturbation parameter \(\delta\).  
    Define \(\mathcal{M}^0\) as the limiting model obtained from \(\mathcal{M}^\delta\) when \(\delta \to 0\).  
    A numerical scheme \(\mathcal{M}^\delta_\Delta\) for approximating \(\mathcal{M}^\delta\), where \(\Delta = (\Delta t, \Delta x)\) denotes the temporal and spatial discretization parameters, is said to be \textit{asymptotic preserving} (AP) if:  
    \begin{enumerate}
        \item its stability condition is independent of \(\delta\), and
        \item in the limit \(\delta \to 0\), the scheme \(\mathcal{M}^\delta_\Delta\) converges to a consistent discretization \(\mathcal{M}^0_\Delta\) of the continuous limiting model \(\mathcal{M}^0\).
    \end{enumerate}
    This concept is illustrated in Figure~\ref{fig_ap_diagram}.}
\end{definition}

\begin{figure}[ht]
    \centering
    \[
    \begin{tikzcd}[row sep=2.5em, column sep=3em]
        \mathcal{M}^\delta \arrow[r, "\delta \to 0"] & \mathcal{M}^0 \\
        \mathcal{M}_\Delta^\delta \arrow[u, "\Delta \to 0"'] \arrow[r, "\delta \to 0"'] & \mathcal{M}_\Delta^0 \arrow[u, "\Delta \to 0"']
    \end{tikzcd}
    \]
    \caption{
        Diagram illustrates the asymptotic-preserving (AP) property. 
        $\mathcal{M}^\delta$, $\mathcal{M}^0$ denotes the continuous compressible and incompressible system, while $\mathcal{M}_\Delta^\delta$, $\mathcal{M}_\Delta^0$ represents their discrete counterparts, respectively.
        The AP is verified if the diagram commutes.
    } 
    \label{fig_ap_diagram}
\end{figure}

We denote by $\mathcal{M}^\delta$ the compressible system \eqref{compressible_CHNS} in the low Mach number regime, and by $\mathcal{M}^\delta_\Delta$ its numerical discretization according to \eqref{PRK_scheme}. 
The corresponding incompressible Cahn-Hilliard-Navier-Stokes equations with gravitational acceleration is denoted by $\mathcal{M}^0$ and reads as follows:
\begin{equation}\label{incompressible_CHNS}
    \left\{
    \begin{split}
        &\operatorname{div}\mathbf{v} = 0,\quad \rho = \rho_0,\\
        &\mathbf{v}_t + \operatorname{div}(\mathbf{v}\otimes\mathbf{v}) + \nabla p_{(1)} 
            = \mathbf{g} + \frac{1}{\rho_0}\left(
                \nu\Delta\mathbf{v} + \operatorname{div}\mathbb{T}_2(c)
            \right),\\
        &c_t + \operatorname{div}(\mathbf{v}c) = \frac{1}{\rho_0}\Delta\mu,    
    \end{split}
    \right.
\end{equation}
where $\rho_0>0$ is the constant density of the incompressible mixture.
Here, $p_{(1)}$ denotes the scalar pressure, which acts as a Lagrange multiplier associated with the incompressibility constraint $\operatorname{div}\mathbf{v}=0$.
We denote by $\mathcal{M}^0_\Delta$ the discretization of \eqref{incompressible_CHNS} according to \eqref{PRK_scheme}.

Assume that the density, velocity field, concentration difference, and pressure admit the following expansions \cite{degond_tang_11,KM81,NBALMM14}:
\begin{equation}\label{hilbert_expansion}
    \begin{split}
        \rho(\mathbf{x},t) &= \rho_{(0)}(\mathbf{x},t) + \delta\rho_{(1)}(\mathbf{x},t) + \dots,\\[3pt]
        \mathbf{v}(\mathbf{x},t) &= \mathbf{v}_{(0)}(\mathbf{x},t) + \delta\mathbf{v}_{(1)}(\mathbf{x},t) + \dots,\\[3pt]
        c(\mathbf{x},t) &= c_{(0)}(\mathbf{x},t) + \delta c_{(1)}(\mathbf{x},t) + \dots,\\[3pt]
        p(\mathbf{x},t) &= p_{(0)}(\mathbf{x},t) + \delta p_{(1)}(\mathbf{x},t) + \dots,
    \end{split}
\end{equation}
and the well-preparedness of the data, that is, 
\begin{equation}\label{eq_well_prepared_data}
  \nabla\rho_{(0)} = 0,\quad \operatorname{div}\mathbf{v}_{(0)} = 0.
\end{equation}
Here, the terms in the pressure expansion follow directly from a Taylor series around $\rho_{(0)}$, so that $p_{(0)} = p\left(\rho_{(0)}\right)$, $p_{(1)} = p^\prime\left(\rho_{(0)}\right)\rho_{(1)}$, and higher-order terms are obtained similarly.\\

\begin{theorem}\label{teo_AP}
  Consider an IMEX Stiffly Accurate partitioned Runge-Kutta scheme with $\tilde{\beta} = \beta$ given by \eqref{PRK_scheme}.
  Assume that $U^{n}$ and each stage $\tilde{U}^{(l)}$, $U^{(l)}$ admit the decomposition \eqref{hilbert_expansion}.
  If $U^n$ verifies \eqref{eq_well_prepared_data}, then so does $U^{(l)}$.
  Furthermore, if $U^{n+1}$ admits the decomposition \eqref{hilbert_expansion}, then $U^{n+1}$ is well-prepared and the scheme is AP.
\end{theorem}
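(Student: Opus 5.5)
The argument is an induction over the stages $l=1,\dots,s$ of \eqref{PRK_scheme}, using formal asymptotic expansion in $\delta$. With $\delta=C_p^{-1}$ we write $p_2(\rho)=C_{p,2}\rho^\gamma$ and choose the splitting so that $C_{p,2}\sim \delta^{-1}$ while $C_{p,1}=\mathcal{O}(1)$. Only $p_2$ produces a singular term, and that term sits in the implicit operator $\tilde{\mathcal{L}}_1(\tilde U,U)$. Into each stage equation we substitute the expansions \eqref{hilbert_expansion} for $U^n$, $\tilde U^{(j)}$ and $U^{(j)}$, and then collect powers of $\delta$. The $\mathcal{O}(\delta^{-1})$ terms in the momentum components give
\[
\alpha_{l,l}(I_M\otimes D_M^T)\,p_2(\varrho^{(l)}_{(0)}) + \sum_{j<l}\alpha_{l,j}(I_M\otimes D_M^T)\,p_2(\varrho^{(j)}_{(0)})=0,
\]
together with the analogous identity for $(D_M^T\otimes I_M)$. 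The same order in the explicit stages $\tilde U^{(l)}$ yields the corresponding identities with $\tilde\alpha$; the explicit operators contain no $p_2$, but the explicit stages still evaluate $\tilde{\mathcal{L}}$ at the implicit arguments $U^{(j)}$.

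The induction hypothesis is that $U^n$ and $U^{(j)}$, $j<l$, are well prepared. Then $p_2(\varrho^{(j)}_{(0)})$ is a discrete constant, so the discrete gradient annihilates it, and since $\alpha_{l,l}\ne0$ we obtain $(I_M\otimes D_M^T)p_2(\varrho^{(l)}_{(0)})=0$. The only constants are in the kernel of $D_M^T$; one checks this from the explicit form of $D_M$, whose transpose is a one-dimensional difference matrix with trivial kernel except on constants. Combined with strict monotonicity of $\rho\mapsto\rho^\gamma$ for $\rho>0$, this shows that $\varrho^{(l)}_{(0)}$ is constant on the grid. To identify the constant, sum the discrete continuity equation at order $\delta^0$ over all cells. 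The boundary conditions make $\mathbf 1^T(I_M\otimes D_M)=0$, and the numerical diffusion $h\mathcal{A}_1$ is also of divergence form, so total mass is conserved and $\varrho^{(l)}_{(0)}=\varrho^n_{(0)}=\rho_0$. Inserting this back into the $\mathcal{O}(1)$ continuity equation, all terms involving $\varrho_{(0)}$ differences vanish, including $h\mathcal{A}_1\tilde\varrho$ once $\tilde\varrho^{(j)}_{(0)}$ is constant. Here the explicit stages must also be handled inductively, by the same argument. What remains is
\[
\rho_0\Big(\alpha_{l,l}\operatorname{div}_h \mathbf V^{(l)}_{(0)}+\sum_{j<l}\alpha_{l,j}\operatorname{div}_h\mathbf V^{(j)}_{(0)}\Big)=0 ,
\]
with $\operatorname{div}_h=(I_M\otimes D_M)V_1+(D_M\otimes I_M)V_2$. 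Since $\rho_0\Delta t\ne0$ and the earlier stages are divergence free, this gives $\operatorname{div}_h\mathbf V^{(l)}_{(0)}=0$. This closes the induction for both the stages $U^{(l)}$ and the stages $\tilde U^{(l)}$.

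For $U^{n+1}$, the scheme is stiffly accurate ($\alpha_{s,j}=\beta_j$) and $\tilde\beta=\beta$, so $U^{n+1}=U^{(s)}$. It therefore inherits well-preparedness from the last stage.

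For the AP property, the stability part is immediate. By \eqref{time_step_cond} the time step depends only on $p_1$ and $\mathbf v$, not on $C_{p,2}$, and the $c$-system \eqref{system_c} together with the block $D_\varrho+\Delta t\alpha_{i,i}B$ from the Proposition is solvable independently of $\delta$. For consistency, we read off the $\mathcal{O}(1)$ momentum and $c$ equations with $\varrho_{(0)}=\rho_0$. The term $p_{2,(1)}=p_2'(\rho_0)\varrho_{(1)}$ appears as a gradient that plays the role of the Lagrange multiplier $p_{(1)}$. The resulting stage equations are an IMEX discretization of \eqref{incompressible_CHNS}: implicit viscous terms and pressure gradient, the discrete divergence constraint, and explicit convection, capillarity and Cahn–Hilliard terms, scaled by $1/\rho_0$. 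This is the scheme $\mathcal{M}^0_\Delta$.

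I expect two steps to be delicate. The first is showing that the $\mathcal{O}(\delta^{-1})$ balance forces a \emph{constant} density; this needs $\ker D_M^T=\{0\}$ modulo the staggered structure, which means carefully checking that only grid constants are annihilated by both discrete gradients. The second is showing that the Rusanov diffusion $h\mathcal{A}_1\tilde\varrho$ does not spoil the divergence-free condition; this requires the explicit stages $\tilde\varrho^{(j)}_{(0)}$ to be constant as well, which I would establish in the same induction.
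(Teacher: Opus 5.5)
Your proposal is correct and follows essentially the same route as the paper's proof. It is a stage-by-stage induction: the $\mathcal{O}(\delta^{-1})$ momentum balance forces a spatially constant $\rho^{(l)}_{(0)}$. Summing the discrete continuity equation, with the explicit-stage densities $\tilde\rho^{(l)}_{(0)}$ shown constant so the Rusanov term drops out, gives $\rho^{(l)}_{(0)}=\rho^n_{(0)}$ and hence $\operatorname{div}_h\mathbf{v}^{(l)}_{(0)}=0$. Stiff accuracy transfers this to $U^{n+1}$, and the $\mathcal{O}(1)$ terms yield the limit scheme with $p_{(1)}$ as Lagrange multiplier.

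The one overreach is your aside that the Proposition on $D_\varrho+\Delta t\alpha_{i,i}B$ gives $\delta$-independent solvability of the implicit stage. The full Jacobian \eqref{jacobian_matrix} also contains $A_{p_2}=\mathcal{O}(\delta^{-1})$, so that Proposition does not cover it. The paper treats the stability half of AP only through the CFL restriction \eqref{time_step_cond}, so you do not need that claim.
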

\begin{proof}
    For simplicity, in the proof, we shall assume that the pressure splitting stated in \eqref{pressure_splitting} is reformulated in terms of the Mach number, that is,
    $$\frac{1}{\delta} p(\rho) = C_{p,1}p(\rho) + \frac{1-C_{p,1}\delta}{\delta}p(\rho),\text{ for }p(\rho) = \rho^\gamma.$$
    We define $p^{(l)}=p\left(\rho^{(l)}\right)$ for each stage $l$.
    Let $\nabla_h$, $\operatorname{div}_h$ and $\Delta_h$ denote the discrete gradient, divergence and laplacian operators, and $\Lambda_{*}$ be the diagonal matrix of the numerical viscosities associated to the flux of the continuity equation.
    We prove the result by induction on the number of stages $s$.

    For one stage $s=1$. 
    First, let us show that $U^{(1)}$ is well-prepared. 
    We have that $\tilde{U}^{(1)} = U^n$, so the momentum part in $U^{(1)}$ is
    \begin{equation*}
      \begin{split}
        (\rho\mathbf{v})^{(1)} = (\rho\mathbf{v})^{n} 
        + \Delta t\alpha_{1,1}&\left[
          -\left(\operatorname{div}_{h}\left(\rho\mathbf{v}\otimes\mathbf{v} + C_{p,1}p(\rho)\mathbb{I}\right)\right)^{n}+
          \rho^{n}\mathbf{g} + 
          \operatorname{div}_{h}\left(\mathbb{T}_1v^{(1)}\right)\right.\\[5pt]
          &\left.\enspace +\operatorname{div}_{h}\left(\mathbb{T}_2c^{n}\right) -
          \frac{1-C_{p,1}\delta}{\delta}\nabla_{h} p\left(\rho^{(1)}\right)
        \right].
      \end{split}
    \end{equation*}
    Since $\alpha_{1,1}\neq0$, taking limits when $\delta\to0$ it is obtained that
    \begin{equation}\label{eq_grad_pressure_zero}
        \nabla_{h} p\left(\rho^{(1)}_{(0)}\right) = 0.
    \end{equation}
    By definition of the pressure yields that $\rho^{(1)}_{(0)}$ is constant.
    The leading terms in the implicit stage for the mass conservation equation are given by
    \begin{equation*}
        \begin{split}
            \rho^{(1)}_{(0)} 
            &= 
            \rho^{n}_{(0)} + \Delta t\alpha_{1,1}\left(\left(\operatorname{div}_h\left(\rho_{(0)}\mathbf{v}_{(0)}\right)\right)^{(1)} + \operatorname{div}_h\left(\tilde{\Lambda}_{*}\nabla_h\rho^{n}_{(0)}\right)\right)\\[5pt]
            &=
            \rho^{n}_{(0)} + \Delta t\alpha_{1,1}\rho_{(0)}^{(1)}\operatorname{div}_h\left(\mathbf{v}^{(1)}_{(0)}\right).
        \end{split}
    \end{equation*}
    Since both $\rho^{(1)}_{(0)}$ and $\rho^{n}_{(0)}$ are constant, summing up the above expression over all spatial indices, a telescope sum in the velocity terms appear, and the boundary contributions vanish due to the boundary conditions \eqref{bdry_cond}.
    Therefore, $\rho^{(1)}_{(0)}=\rho^{n}_{(0)}$. 
    Consequently, the divergence free condition is obtained from the mass conservation and that $\alpha_{i,i}\neq 0$, i.e.,
    \begin{equation}\label{eq_div_free_proof}
        \operatorname{div}_{h}\mathbf{v}^{(1)}_{(0)} = 0.
    \end{equation}
    Since the scheme is Stiffly Accurate, then $U^{n+1} = U^{(1)}$, so $U^{n+1}$ is well-prepared provided it admits the decomposition \eqref{hilbert_expansion}.

    Let us show that the scheme is AP for $s=1$.
    For simplicity, we shall assume that $\rho^{(1)}_{(0)} = 1$, otherwise all the terms must be scaled by $(\rho^{(1)}_{(0)})^{-1}$. 
    The leading terms in the momentum of $U^{(1)}$, are those involved in the $\mathcal{O}(1)$ and $\mathcal{O}(\delta)$ terms, 
    so applying that $\tilde{\rho}^{(1)}_{(0)}=\rho^n_{(0)}$, \eqref{eq_grad_pressure_zero} and \eqref{eq_div_free_proof}, yields 
    \begin{equation*}
      \begin{split}
        \mathbf{v}^{(1)}_{(0)} = \mathbf{v}^{n}_{(0)} + 
        \Delta t\alpha_{1,1}\left[
            \mathbf{g}
            -\left(\operatorname{div}_{h}\left({\mathbf{v}}_{(0)}\otimes{\mathbf{v}}_{(0)}\right)\right)^{n}
            + \operatorname{div}_{h}\mathbb{T}_2c^n_{(0)}
            + \nu\Delta_h\mathbf{v}^{(1)}_{(0)}
            - \nabla_{h} p_{(1)}^{(1)}
        \right]
      \end{split}.
    \end{equation*}
    Similarly, for the Cahn-Hilliard type equation the leading terms are the $\mathcal{O}(1)$, i.e.,
    \begin{equation*}
      c_{(0)}^{(1)} = c_{(0)}^{n} + 
      \Delta t\alpha_{1,1}\left[
        - \left(\operatorname{div}_{h}\left(\mathbf{v}_{(0)}c_{(0)}\right)\right)^n 
        + \Delta\left(
          \psi^\prime_{+}\left(c_{(0)}^{(1)}\right) + \psi^\prime_{-}\left(c_{(0)}^{n}\right) - \varepsilon\Delta c^{(1)}_{(0)}
        \right)
      \right].
    \end{equation*}
    Since the scheme is Stiffly Accurate, then $U^{n+1} = U^{(1)}$, so the scheme is AP for $s=1$.\\

    We assume that the result is true for the first $s-1$ stages, and we prove it for stage $s$.    
    The momentum equation in $U^{(s)}$ is given by
    \begin{equation*}
      \begin{split}
        \left(\rho\mathbf{v}\right)^{(s)} 
        =& \left(\rho\mathbf{v}\right)^{n} +
        \Delta t\sum_{j\leq s}\alpha_{s,j}\left(
          -\left(\operatorname{div}_{h}\widetilde{\left(\rho\mathbf{v}\otimes\mathbf{v} + C_{p,1}p\mathbb{I}\right)}\right)^{(j)} + 
          \tilde{\rho}^{(j)}\mathbf{g} +
          \operatorname{div}_{h}\mathbb{T}_1\mathbf{v}^{(j)}\right.\\[5pt]
          &\left. + \operatorname{div}_{h}\mathbb{T}_2\tilde{c}^{(j)}_j
          -\frac{1-C_{p,1}\delta}{\delta}\nabla_{h} p\left(\rho^{(j)}\right)
        \right).
      \end{split}
    \end{equation*}
    Taking the limit when $\delta\to0$ with $\alpha_{s,s}\neq0$ and applying that the first $s-1$ implicit stages are well-prepared, yields that
    $$\nabla_{h} p\left(\rho^{(s)}_{(0)}\right) = 0,$$
    so $\rho^{(s)}_{(0)}$ is spatially constant.
    
    It follows inductively, that the leading terms in the conservation of mass in $\tilde{U}^{(l)}$ for $l=1,\cdots,s$ are
    \begin{equation}\label{eq_well_prepared_rho_tilde}
        \tilde{\rho}^{(l)}_{(0)} 
      = \rho^{n}_{(0)} - 
      \Delta t\sum_{j<l}\tilde{\alpha}_{i,j}\left(
        \operatorname{div}_{h}\left(\left(\rho_{(0)}\mathbf{v}_{(0)}\right)^{(j)}\right) + 
        \operatorname{div}_h\left(\tilde{\Lambda}_{*}\nabla_h\tilde{\rho}^{(j)}_{(0)}\right)\right) 
    = \rho^n_{(0)},
    \end{equation}
    since $U^{(l)}$ is well-prepared for $l=0,\cdots,s-1$ and $\rho^n_{(0)}$ is constant. 
    Therefore, $\tilde{\rho}^{(l)}_{(0)}$ is constant for $l=1,\cdots,s$. 
    Applying again the induction hypothesis and \eqref{eq_well_prepared_rho_tilde}, the leading terms in the conservation of mass of the implicit $s$-stage are:
    \begin{equation*}
        \begin{split}
            \rho^{(s)}_{(0)} 
            &= \rho^n_{(0)} - \Delta t\sum_{j\leq s}\alpha_{s,j}\left(\operatorname{div}_{h}\left(\rho\mathbf{v}\right)^{(j)}_{(0)} + \operatorname{div}\left(\tilde{\Lambda}_{*}\nabla_h\tilde{\rho}^{(j)}_{(0)}\right)\right)\\[5pt]
            &= \rho^n_{(0)} - \Delta t\sum_{j\leq s}\alpha_{s,j}\rho^{(j)}_{(0)}\operatorname{div}_{h}\left(\mathbf{v}^{(j)}_{(0)}\right)\\[5pt]
            &= \rho^n_{(0)} - \rho^{(s)}_{(0)}\Delta t\alpha_{s,s}\operatorname{div}_{h}\mathbf{v}^{(s)}_{(0)}.
        \end{split}
    \end{equation*}
    Proceeding as before, adding all terms up in the previous expression yields that $\rho^{(s)}_{(0)}=\rho^{n}_{(0)}$, which implies that 
    $$\operatorname{div}_{h}\mathbf{v}^{(s)}_{(0)} = 0.$$
    So $U^{n+1}$ is well-prepared since $\alpha_{s,j}=\beta_j$ for every $j=1,\cdots, s$.

    It only remains to show that the scheme of $s$-stages is AP. 
    Similarly, we assume that $\rho^{(l)}_{(0)} = 1$ for $l=1,\ldots,s$. 
    Applying that $U^{(l)}$ is well-prepared for $l=1,\ldots,s$ and \eqref{eq_well_prepared_rho_tilde}, then the leading terms of the momentum equation at each $l$-stage are given by
    \begin{equation*}
      \begin{split}
        &\tilde{\mathbf{v}}_{(0)}^{(l)} 
        = 
        \mathbf{v}_{(0)}^{n} +
        \Delta t\sum_{j< l}\tilde{\alpha}_{l,j}\left(
          \mathbf{g} 
          - \operatorname{div}_{h}\widetilde{\left(\mathbf{v}\otimes\mathbf{v}\right)}_{(0)}^{(j)}
          + \nu\Delta_h\mathbf{v}_{(0)}^{(j)}
          + \operatorname{div}_{h}\mathbb{T}_2\tilde{c}_{(0)}^{(j)}
          - \nabla_{h} p_{(1)}^{(j)}
        \right),\\[5pt]
        &\mathbf{v}_{(0)}^{(l)} 
        = 
        \mathbf{v}_{(0)}^{n} +
        \Delta t\sum_{j\leq l}\alpha_{l,j}\left(
          \mathbf{g} 
          - \operatorname{div}_{h}\widetilde{\left(\mathbf{v}\otimes\mathbf{v}\right)}_{(0)}^{(j)}
          + \nu\Delta_h\mathbf{v}_{(0)}^{(j)}
          + \operatorname{div}_{h}\mathbb{T}_2\tilde{c}_{(0)}^{(j)}
          - \nabla_{h} p_{(1)}^{(j)}
        \right).
      \end{split}
    \end{equation*}
    For the Cahn-Hilliard type equation it is obvious. 
    The AP property follows from the fact that the scheme is Stiffly Accurate.
\end{proof}

\section{Numerical experiments}\label{section_numerical_experiments}
The numerical experiments are presented in this section. 
The main objectives are as follows:
\begin{enumerate}
  \item To show that the order of the global convergence error agrees with the order of the numerical scheme.
  \item To verify that the number of time steps required by the IMEX scheme is consistent with the stability restriction imposed by the convective subsystem \eqref{time_step_cond}.
  \item To explain the properties preserved by the scheme, such as mass conservation, region preservation along with the CFL value (see Section \ref{section_time_step}), and others. 
\end{enumerate}
In all our experiments, the initial CFL number is set to $0.4$, the adiabatic exponent $\gamma$ fixed to $\frac{5}{3}$ and the parameters are set to 
$$\nu=1, \quad \lambda=10^{-1},\quad \varepsilon=10^{-4},\quad g=-10.$$
We define $C_{p,1}=\sqrt{C_p}$ and $C_{p,2}=C_{p} - C_{p,1}$. 
This choice has proven to be effective, as the experiments conducted under this setting have been successful.

All experiments were performed using a MATLAB R2024a implementation on a Linux machine running on 32 core of an AMD EPYC 7282.

We consider Stiffly Accurate Runge-Kutta schemes. 
In particular, we use a first-order method defined by the following Butcher tableau:
\begin{equation*}
  \text{EE-IE}
  \begin{array}{c|c}
    0 & 0  \\
    \hline
        & 1 \\
    \end{array},
    \qquad
    \begin{array}{c|c}
    1  &  1  \\
    \hline
        &  1 \\
    \end{array},
\end{equation*}
and a second-order method given by the $^\ast$-DIRKSA scheme:
\begin{equation*}
  ^\ast\text{-DIRKSA}
    \begin{array}{c|cc}
    0 & 0 & 0 \\
    1 + s & 1 + s & 0 \\
    \hline
        & s & 1 - s \\
    \end{array},
    \qquad
    \begin{array}{c|cc}
    1 - s & 1 -s & 0  \\
    1 &  {s} & {1-s}  \\
    \hline
        &  {s} & {1-s} \\
    \end{array},
    \quad s = \frac{1}{\sqrt{2}}.
\end{equation*}

\subsection{Order Tests}
In this section we show that the $^*$-DIRKSA scheme attains second-order of convergence. 
To this end, we introduce a forcing term into the equations, ensuring that the solution follows a prescribed analytical form. 
Specifically, the exact solution for the one-dimensional \eqref{eq_compressible_chns_1D} case is defined as
\begin{align*}
  \rho^\ast(x, t) &= 1 + \delta \cos(2\pi x) (t + 1), \\
  v^\ast_1(x, t) &= 0, \\
  c^\ast(x, t)   &= \frac{3}{4} - 0.1 (1 - \delta) \cos(\pi x) (t - 1),
\end{align*}
and for the two-dimensional case \eqref{eq_compressible_chns_2D},
\begin{align*}
  \rho^\ast(x, y, t) &= 1 + \delta \cos(2\pi x) \cos(\pi y) (t + 1), \\
  v^\ast_1(x, y, t) &= (1 + \delta) \big(1 - \cos(2\pi x)\big) \sin(2\pi y) (1 - 2t^2), \\
  v^\ast_2(x, y, t) &= (1 + \delta) \big(1 - \cos(2\pi y)\big) \sin(2\pi x) (2t^2 - 1), \\
  c^\ast(x, y, t)   &= \frac{3}{4} - 0.1 (1 - \delta) \cos(\pi x) \cos(\pi y) (t - 1).
\end{align*}
Notice that in both cases the initial velocity filed is divergence-free and the density is constant in space at $\mathcal{O}(\delta)$.

For the performance, the squared Mach numbers are taken as $\delta = 10^{-k}$ for $k=1,\ldots,8$.
The time-step $\Delta t$ is determined by the convective subsystem according to \eqref{time_step_cond}.
We consider meshes of size $M = 2^{i}$ for $i = 3,\cdots,8$. 
The global errors and the experimental orders of convergence (EOC) are evaluated at $T= 0.01$ and are computed as
\begin{equation*}
  e_{M} = h^2\sum_{k=1}^{4}\!\sum_{i,j}^M\!|u^n_{k,i,j} - u_{k}(\mathbf{x}_{i,j}, T)|,
  \quad
  \text{EOC}_M = \log_2\left(\frac{e_{M}}{e_{2M}}\right).
\end{equation*}
The results for the one- and two-dimensional cases are shown in Table \ref{table_orders_convergence}.
In both tables, the $^\ast$-DIRKSA scheme achieves second-order convergence, while the EE-IE scheme is first-order in the one-dimensional case and in the two-dimensional case for $C_p\leq10^6$ decreases away from two.
When $C_p=10^7,10^8$ in the latter case, the order of convergence seems to tend to two.
One possible explanation is that the value of $C_{p,1}$ increases significantly as $C_p$ increases, making the time-integrator more robust.
Consequently, the spatial discretization order dominates the convergence.
Nevertheless, this phenomenon also occurs for $C_p\leq10^6$ until $M$ becomes sufficiently large, as illustrated in Table \ref{table_orders_convergence}.
We expect that for $M > 1024$ the mentioned orders tends to 1, but due to the computational cost, we did not perform such experiments.
In the remainder of this work, we restrict our experiments to the $^\ast$-DIRKSA scheme, since it consistently attains second-order of convergence.

\begin{table}
    \centering
    \setlength{\tabcolsep}{3pt}
    \begin{tabular}{cccccc|cccc}
    &   & \multicolumn{4}{c}{1D}                      & \multicolumn{4}{c}{2D}                      \\
    \hline
    &   & \multicolumn{2}{c}{$^\ast$-\text{DIRKSA}} & \multicolumn{2}{c}{\text{EE-IE}} & \multicolumn{2}{c}{$^\ast$-\text{DIRKSA}} & \multicolumn{2}{c}{\text{EE-IE}} \\
    \hline
        $C_p$ & $M$ & $e_M$ & EOC$_M$ & $e_M$ & EOC$_M$ & $e_M$ & EOC$_M$ & $e_M$ & EOC$_M$ \\
        \hline
        \multirow{6}{*}{$10$}
        & 8     & 1.317e-03 & ---   & 8.799e-04 & ---   & 2.4179e-02 & ---  & 2.0659e-02 & ---  \\
        & 16    & 2.608e-04 & 2.336 & 2.113e-04 & 2.058 & 6.8859e-03 & 1.81 & 6.3482e-03 & 1.70 \\
        & 32    & 6.411e-05 & 2.024 & 1.490e-04 & 0.504 & 1.8061e-03 & 1.93 & 1.8142e-03 & 1.81 \\
        & 64    & 1.609e-05 & 1.995 & 8.938e-05 & 0.738 & 4.5478e-04 & 1.99 & 5.0700e-04 & 1.84 \\
        & 128   & 4.028e-06 & 1.998 & 4.947e-05 & 0.853 & 1.1369e-04 & 2.00 & 1.4907e-04 & 1.77 \\
        & 256   & 1.010e-06 & 1.996 & 2.665e-05 & 0.893 & 2.8398e-05 & 2.00 & 4.8390e-05 & 1.62 \\
        & 512   & 2.525e-07 & 2.000 & 1.365e-05 & 0.965 & 7.0908e-06 & 1.86 & 1.7788e-05 & 1.19 \\
        & 1024  & 6.317e-08 & 1.999 & 6.933e-06 & 0.977 & 1.76861-06 & 2.00 & 7.3560e-06 & 1.27 \\
        \hline
        \multirow{6}{*}{$10^2$} 
        & 8     & 1.216e-03 & ---   & 6.669e-04 & ---   & 1.9006e-02 & ---  & 1.6018e-02 & ---  \\
        & 16    & 2.904e-04 & 2.066 & 1.247e-04 & 2.419 & 6.0020e-03 & 1.66 & 5.6030e-03 & 1.52 \\
        & 32    & 7.263e-05 & 1.999 & 7.355e-05 & 0.762 & 1.6209e-03 & 1.89 & 1.6349e-03 & 1.78 \\
        & 64    & 1.814e-05 & 2.001 & 5.459e-05 & 0.430 & 4.1116e-04 & 1.98 & 4.5097e-04 & 1.86 \\
        & 128   & 4.540e-06 & 1.998 & 3.312e-05 & 0.721 & 1.0310e-04 & 2.00 & 1.2913e-04 & 1.80 \\
        & 256   & 1.135e-06 & 2.000 & 1.772e-05 & 0.902 & 2.5790e-05 & 2.00 & 4.0565e-05 & 1.67 \\
        & 512   & 2.837e-07 & 2.000 & 9.175e-06 & 0.949 & 6.4443e-06 & 2.00 & 1.4410e-05 & 1.49 \\
        & 1024  & 7.094e-08 & 2.000 & 4.686e-06 & 0.969 & 1.6087e-06 & 2.00 & 5.8013e-06 & 1.31 \\
        \hline
        \multirow{6}{*}{$10^3$}
        & 8     & 6.203e-04 & ---   & 3.995e-04 & ---   & 1.2832e-02 & ---  & 1.1383e-02 & ---  \\
        & 16    & 1.246e-04 & 2.316 & 9.067e-05 & 2.139 & 5.5906e-03 & 1.20 & 5.2857e-03 & 1.11 \\
        & 32    & 2.866e-05 & 2.120 & 2.532e-05 & 1.840 & 1.6172e-03 & 1.79 & 1.6096e-03 & 1.72 \\
        & 64    & 7.024e-06 & 2.029 & 1.422e-05 & 0.832 & 4.1831e-04 & 1.95 & 4.4163e-04 & 1.87 \\
        & 128   & 1.749e-06 & 2.006 & 8.406e-06 & 0.759 & 1.0544e-04 & 1.99 & 1.2178e-04 & 1.86 \\
        & 256   & 4.369e-07 & 2.002 & 4.508e-06 & 0.899 & 2.6408e-05 & 2.00 & 3.5516e-05 & 1.78 \\
        & 512   & 1.092e-07 & 2.000 & 2.367e-06 & 0.930 & 6.6054e-06 & 2.00 & 1.1369e-05 &1.64  \\
        & 1024  & 2.730e-08 & 2.000 & 1.203e-06 & 0.977 & 1.6524e-06 & 2.00 & 4.0807e-06 &1.48  \\
        \hline
        \multirow{6}{*}{$10^4$}
        & 8     & 1.312e-04 & ---   & 1.258e-04 & ---   & 6.5014e-03 & ---  & 6.1598e-03 & ---  \\
        & 16    & 6.469e-05 & 1.020 & 4.345e-05 & 1.534 & 4.7449e-03 & 0.45 & 4.6011e-03 & 0.42 \\
        & 32    & 1.728e-05 & 1.905 & 1.717e-05 & 1.340 & 1.5540e-03 & 1.61 & 1.5527e-03 & 1.57 \\
        & 64    & 4.273e-06 & 2.015 & 6.935e-06 & 1.308 & 4.1191e-04 & 1.92 & 4.2678e-04 & 1.86 \\
        & 128   & 1.063e-06 & 2.007 & 3.404e-06 & 1.027 & 1.0445e-04 & 1.98 & 1.1448e-04 & 1.90 \\
        & 256   & 2.653e-07 & 2.002 & 1.840e-06 & 0.888 & 2.6203e-05 & 1.99 & 3.1766e-05 & 1.85 \\
        & 512   & 6.630e-08 & 2.001 & 9.787e-07 & 0.911 & 6.5566e-06 & 2.00 & 9.4616e-06 & 1.75 \\
        & 1024  & 1.657e-08 & 2.000 & 5.067e-07 & 0.950 & 1.6400e-06 & 2.00 & 3.1202e-06 & 1.60 \\
        \hline
        \multirow{6}{*}{$10^5$}
        & 8     & 1.338e-04 & ---   & 1.152e-04 & ---   & 2.1418e-02 & ---  & 2.0674e-02 & ---  \\
        & 16    & 3.780e-05 & 1.824 & 3.461e-05 & 1.736 & 3.8763e-03 & 2.47 & 3.7918e-03 & 2.45 \\
        & 32    & 1.021e-05 & 1.888 & 1.175e-05 & 1.558 & 1.4648e-03 & 1.40 & 1.4683e-03 & 1.37 \\
        & 64    & 2.013e-06 & 2.343 & 4.435e-06 & 1.406 & 4.0482e-04 & 1.86 & 4.1362e-04 & 1.83 \\
        & 128   & 5.653e-07 & 1.832 & 1.907e-06 & 1.218 & 1.0380e-04 & 1.96 & 1.0967e-04 & 1.92 \\
        & 256   & 1.451e-07 & 1.962 & 8.772e-07 & 1.120 & 2.6128e-05 & 1.99 & 2.9357e-05 & 1.90 \\
        & 512   & 3.650e-08 & 1.991 & 4.304e-07 & 1.027 & 6.5437e-06 & 1.99 & 8.2499e-06 & 1.83 \\
        & 1024  & 9.139e-09 & 1.998 & 2.162e-07 & 0.994 & 1.6367e-06 & 2.00 & 2.5131e-06 & 1.71 \\
        \hline
        \multirow{6}{*}{$10^6$}
        & 8     & 1.295e-04 & ---   & 1.189e-04 & ---   & 5.4800e-02 & ---  & 5.3752e-02 & ---  \\
        & 16    & 3.100e-05 & 2.062 & 3.175e-05 & 1.905 & 3.7253e-03 & 3.88 & 3.6616e-03 & 3.88 \\
        & 32    & 8.049e-06 & 1.946 & 9.094e-06 & 1.804 & 1.3844e-03 & 1.43 & 1.3855e-03 & 1.40 \\
        & 64    & 2.135e-06 & 1.915 & 3.101e-06 & 1.552 & 3.9548e-04 & 1.81 & 4.0041e-04 & 1.79 \\
        & 128   & 5.455e-07 & 1.968 & 1.200e-06 & 1.370 & 1.0315e-04 & 1.94 & 1.0638e-04 & 1.91 \\
        & 256   & 1.345e-07 & 2.020 & 5.229e-07 & 1.198 & 2.6102e-05 & 1.98 & 2.7888e-05 & 1.93 \\
        & 512   & 3.307e-08 & 2.024 & 2.455e-07 & 1.091 & 6.5447e-06 & 1.99 & 7.5033e-06 & 1.83 \\
        & 1024  & 8.226e-09 & 2.007 & 1.194e-07 & 1.040 & 1.6374e-06 & 2.00 & 2.1390e-06 & 1.81 \\
        \hline
        \multirow{6}{*}{$10^7$}
        & 8     & 1.691e-04 & ---   & 1.596e-04 & ---   & 1.1107e-01 & ---  & 1.1000e-01 & ---  \\
        & 16    & 3.108e-05 & 2.444 & 3.056e-05 & 2.384 & 6.1995e-03 & 4.16 & 6.1436e-03 & 4.16 \\
        & 32    & 7.937e-06 & 1.969 & 8.362e-06 & 1.870 & 1.3430e-03 & 2.21 & 1.3431e-03 & 2.19 \\
        & 64    & 1.991e-06 & 1.995 & 2.387e-06 & 1.809 & 3.8404e-04 & 1.81 & 3.8672e-04 & 1.80 \\
        & 128   & 4.974e-07 & 2.001 & 8.252e-07 & 1.532 & 1.0202e-04 & 1.91 & 1.0379e-04 & 1.90 \\
        & 256   & 1.279e-07 & 1.960 & 3.265e-07 & 1.338 & 2.6019e-05 & 1.97 & 2.7000e-05 & 1.94 \\
        & 512   & 3.242e-08 & 1.980 & 1.447e-07 & 1.174 & 6.5401e-06 & 1.99 & 6.9520e-06 & 1.96 \\
        & 1024  & 8.413e-09 & 1.946 & 6.849e-08 & 1.079 & ---        & ---  & ---        & ---  \\
        \hline
        \multirow{6}{*}{$10^8$}
        & 8     & 2.433e-04 & ---   & 2.374e-04 & ---   & 2.0294e-01 & ---  & 2.0219e-01 & ---  \\
        & 16    & 3.118e-05 & 2.964 & 3.076e-05 & 2.948 & 1.4330e-02 & 3.82 & 1.4267e-02 & 3.82 \\
        & 32    & 7.918e-06 & 1.977 & 8.071e-06 & 1.930 & 1.4663e-03 & 3.29 & 1.4650e-03 & 3.28 \\
        & 64    & 1.989e-06 & 1.993 & 2.134e-06 & 1.919 & 3.7428e-04 & 1.97 & 3.7570e-04 & 1.96 \\
        & 128   & 4.978e-07 & 1.998 & 6.250e-07 & 1.772 & 1.0034e-04 & 1.90 & 1.0132e-04 & 1.89 \\
        & 256   & 1.253e-07 & 1.990 & 2.207e-07 & 1.502 & 2.5881e-05 & 1.95 & 2.6416e-05 & 1.94 \\
        & 512   & 3.168e-08 & 1.984 & 8.928e-08 & 1.305 & 6.5312e-06 & 1.99 & 6.8161e-06 & 1.96 \\
        & 1024  & 8.156e-09 & 1.957 & 4.014e-08 & 1.153 & ---        & ---  & ---        & ---  \\
        \hline
    \end{tabular}
    \caption{
        $L_1$ errors and experimental order of convergence for the DIRKSA and EE-IE IMEX schemes for both the one- and two-dimensional case, 
        evaluated for different $C_p$ values with $C_{p,1} = \sqrt{C_p}$, in the test using a forced solution.}
    \label{table_orders_convergence}
\end{table}

\subsection{Test 1, 2 and 3}

In this section, we evaluate the performance of the following tests for several stiff pressure coefficients $C_p=10^{2k}$ for $k=1,\ldots,4$.
For this purpose, mass conservation, region preserving for the $c$-variable and the limit properties of the compressible scheme are discussed numerically.\\

\textbf{Test 1} 
This test is designed to show that the method remains stable even when the initial condition for the $c$-variable lies within the unstable region $(-\frac{1}{\sqrt 3}, \frac{1}{\sqrt 3})$ (see \cite{Elliott89,MMY25,mulet_24}).
In particular, we consider the following initial conditions:
\begin{align*}
  &\rho_0(x, y)=1 + \delta\cos(2 \pi x) \cos(\pi y),\\
  &\mathbf{v}_0(x, y) = (1 + \delta)\left(
    (1 - \cos(2\pi x)) \sin(2\pi y), (\cos(2\pi y) - 1) \sin(2\pi x)
  \right), \\
  &c_0(x, y)= 0.1(1 - \delta)\cos(\pi x)\cos(\pi y),
\end{align*}
which clearly verify the boundary conditions \eqref{bdry_cond} and the divergence-free condition for the velocity field.
The density is constant in space at leading orders of the low Mach number.

It is observed in Figures \ref{fig_test_1_Cp_10_8_1}, \ref{fig_test_1_Cp_10_8_2}, \ref{fig_test_1_Cp_10_8_3} that initially the density is dispersed and the order parameter $c$ lies within the interval $(-\frac{1}{\sqrt 3}, \frac{1}{\sqrt 3})$.
However, as the simulation evolves, the density is gradually increased near the bottom boundary $y=0$ due to the effects of gravitation.
In addition, the evolution of the $c$-variable illustrates the process of phase separation where complex patterns are formed.\\

\begin{figure}[h!]
    \centering
    \begin{minipage}[b]{0.45\textwidth}
        \centering
        {\small $T=0$} \\[0.3em]
        \includegraphics[width=\textwidth]{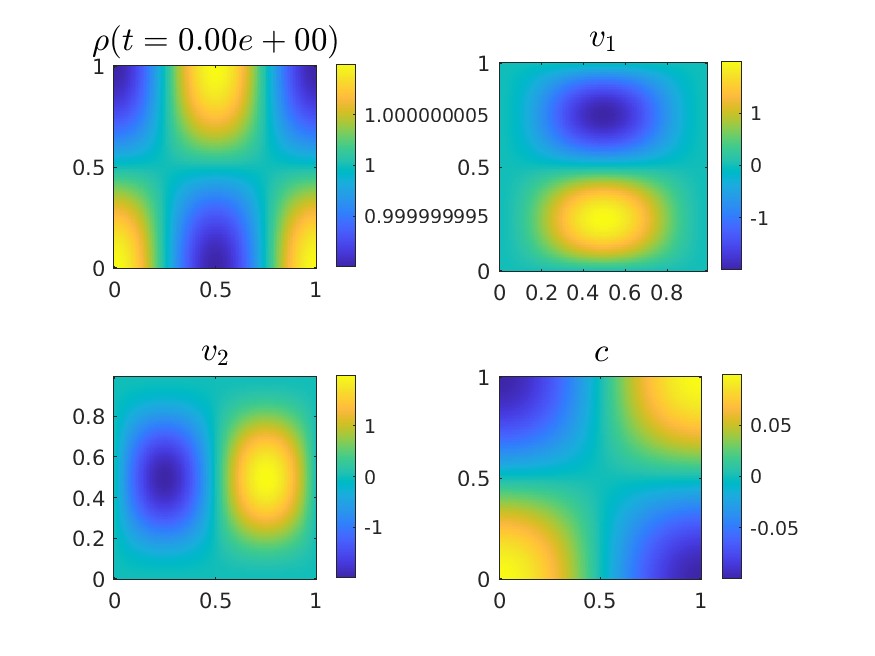}
    \end{minipage}
    \hfill
    \begin{minipage}[b]{0.45\textwidth}
        \centering
        {\small $T=0.01$} \\[0.3em]
        \includegraphics[width=\textwidth]{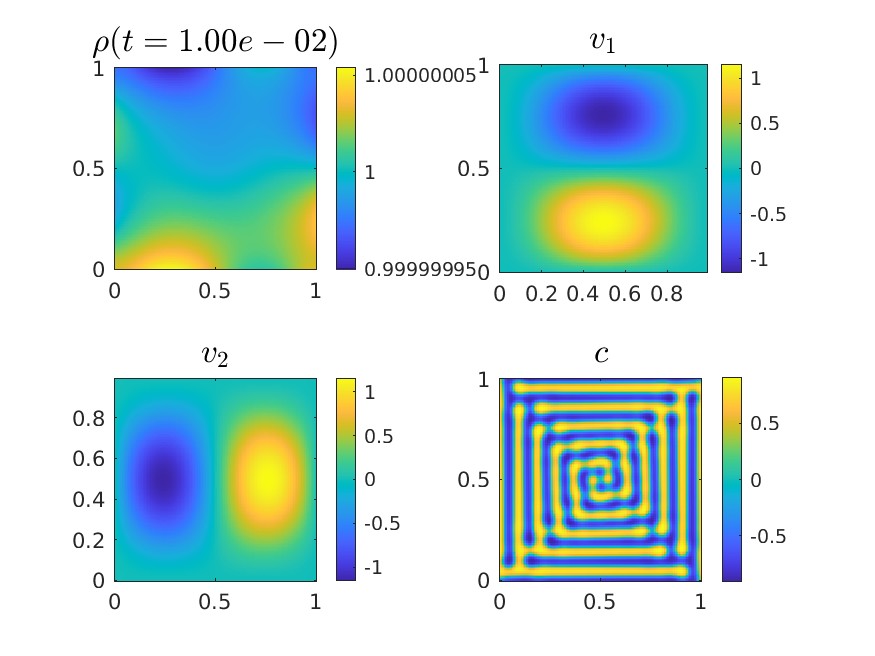}
    \end{minipage}
    \caption{Results for Test 1, $T=0, 0.01$, $M=128$ and $C_p=10^8$. 
    Initially, $c$ is lies within the unstable region. 
    At the beginning of the simulation, phase separation occurs. 
    Moreover, the density starts to become higher in the lower part of the domain due to gravity.}
    \label{fig_test_1_Cp_10_8_1}
\end{figure}

\begin{figure}[h!]
    \centering
    \begin{minipage}[b]{0.45\textwidth}
        \centering
        {\small $T=0.03$} \\[0.3em]
        \includegraphics[width=\textwidth]{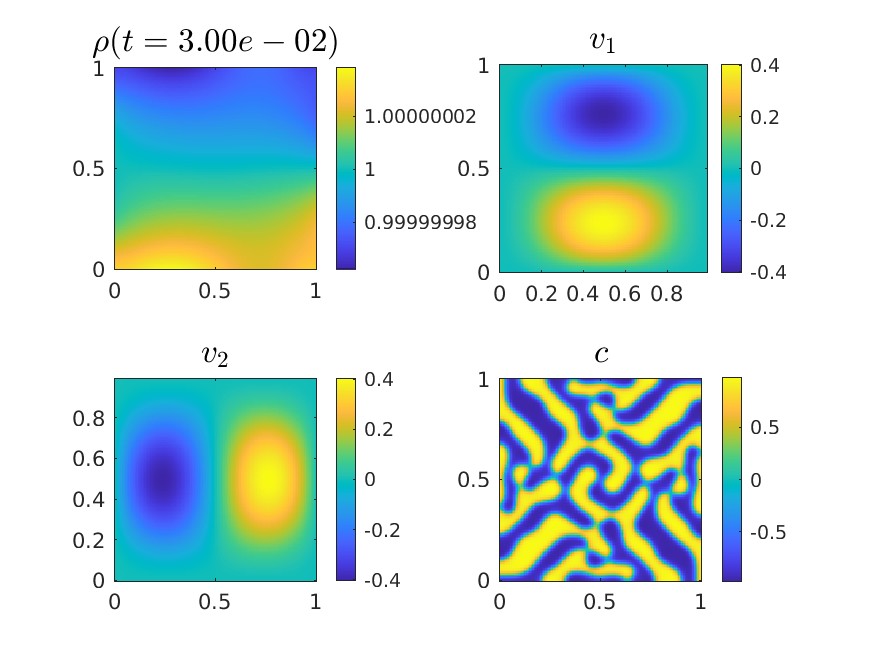}
    \end{minipage}
    \hfill
    \begin{minipage}[b]{0.45\textwidth}
        \centering
        {\small $T=0.05$} \\[0.3em]
        \includegraphics[width=\textwidth]{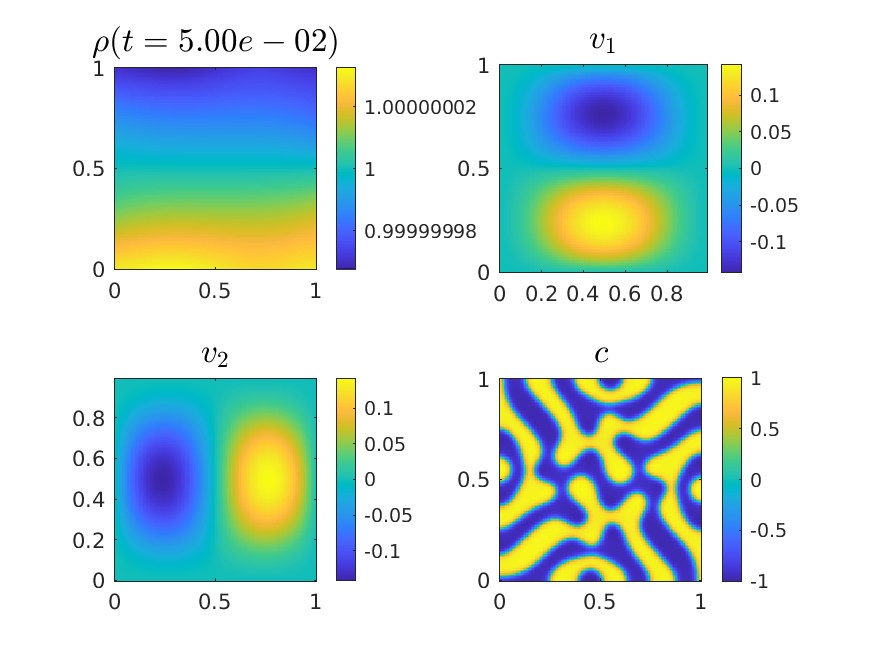}
    \end{minipage}
    \caption{Results for Test 1, $T=0.03, 0.05$, $M=128$ and $C_p=10^8$. 
    The process of spinodal decomposition continues, and the density is accumulating at the bottom of the domain due to gravity.}
    \label{fig_test_1_Cp_10_8_2}
\end{figure}

\begin{figure}[h!]
    \centering
    \begin{minipage}[b]{0.45\textwidth}
        \centering
        {\small $T=0.07$} \\[0.3em]
        \includegraphics[width=\textwidth]{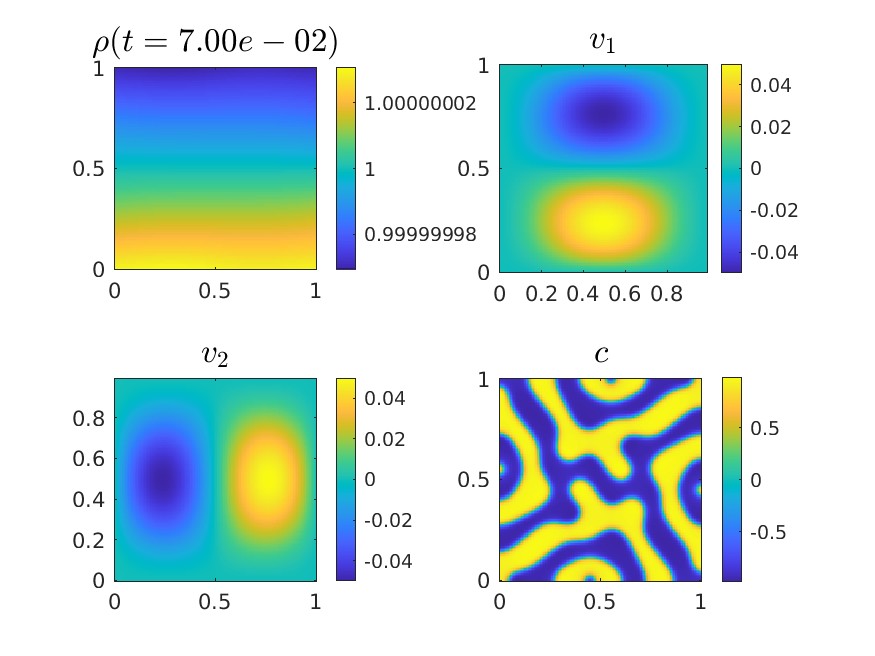}
    \end{minipage}
    \hfill
    \begin{minipage}[b]{0.45\textwidth}
        \centering
        {\small $T=0.1$} \\[0.3em]
        \includegraphics[width=\textwidth]{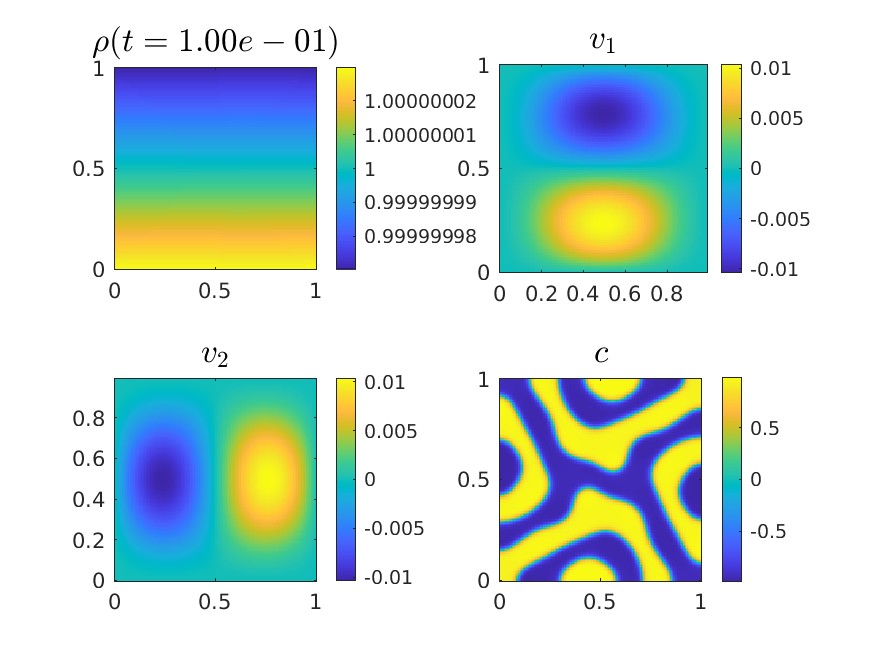}
    \end{minipage}
    \caption{Results for Test 1, $T=0.07, 0.1$, $M=128$ and $C_p=10^8$. 
    It can be observed that density remains almost constant among distinct times and that phase separation has almost finished.}
    \label{fig_test_1_Cp_10_8_3}
\end{figure}

\textbf{Test 2} 
The objective of this test is to assess the performance of the scheme when the order parameter $c$ initially lies outside the unstable region.
We consider:
\begin{align*}
  &\rho_0(x, y)=1 + \delta\cos(2 \pi x) \cos(\pi y),\\
  &\mathbf{v}_0(x, y) = (1 + \delta)\left(
    (1 - \cos(2\pi x)) \sin(2\pi y), (\cos(2\pi y) - 1) \sin(2\pi x)
  \right),\\
  &c_0(x, y)= \frac{3}{4} + 0.1(1 - \delta)\cos(\pi x)\cos(\pi y),
\end{align*}
which verify \eqref{bdry_cond} and $\operatorname{div}\mathbf{v}=0$ and the density is constant in space at $\mathcal{O}(\delta)$. 

Figures \ref{fig_test_2_Cp_10_8_1}, \ref{fig_test_2_Cp_10_8_2} and \ref{fig_test_2_Cp_10_8_3} show that the order parameter $c$ evolves from outside the spinodal region toward a constant value of $3/4$.
In this state, the system follows the compressible Navier-Stokes with gravitational forces behaving in a low Mach number regime when $C_p$ becomes large. \\

\begin{figure}[h!]
    \centering
    \begin{minipage}[b]{0.45\textwidth}
        \centering
        {\small $T=0$} \\[0.3em]
        \includegraphics[width=\textwidth]{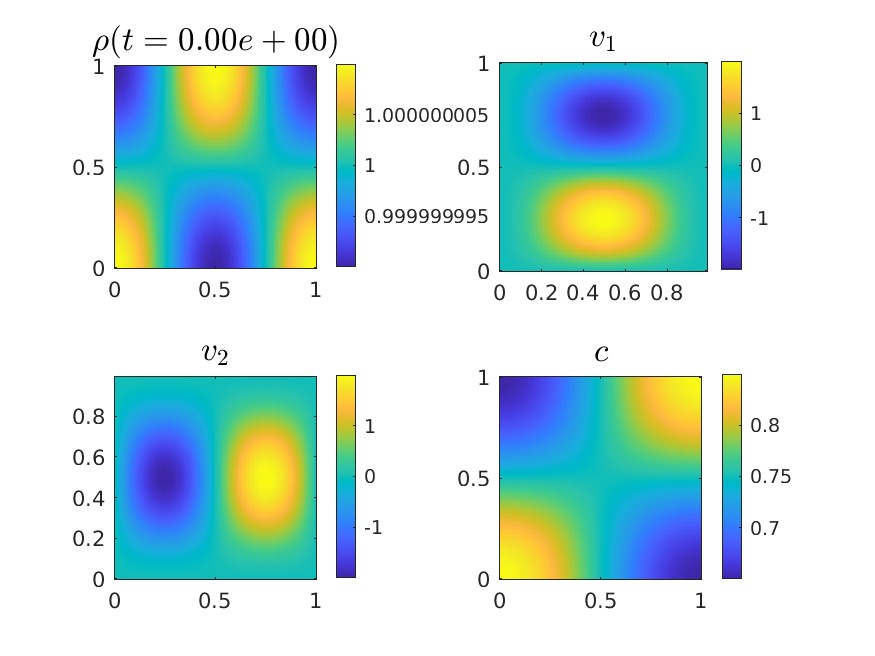}
    \end{minipage}
    \hfill
    \begin{minipage}[b]{0.45\textwidth}
        \centering
        {\small $T=0.01$} \\[0.3em]
        \includegraphics[width=\textwidth]{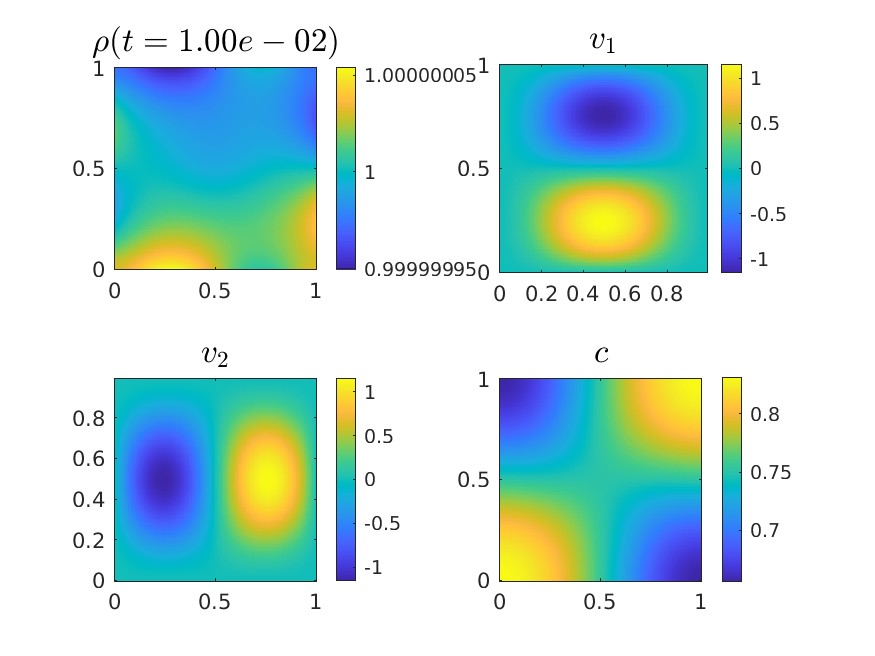}
    \end{minipage}
    \caption{Results for Test 2, $T=0, 0.01$, $M=128$ and $C_p=10^8$. 
    Initially, $c$ lies outside the unstable region and the density is dispersed. 
    At $T=0.01$, the fluid starts to have denser regions near the bottom.}
    \label{fig_test_2_Cp_10_8_1}
\end{figure}

\begin{figure}[h!]
    \centering
    \begin{minipage}[b]{0.45\textwidth}
        \centering
        {\small $T=0.03$} \\[0.3em]
        \includegraphics[width=\textwidth]{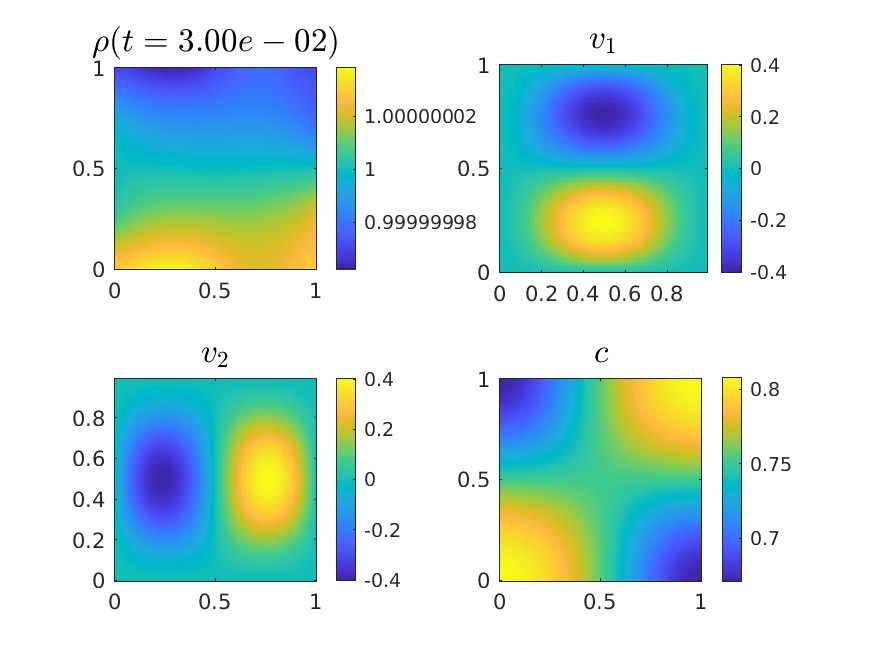}
    \end{minipage}
    \hfill
    \begin{minipage}[b]{0.45\textwidth}
        \centering
        {\small $T=0.05$} \\[0.3em]
        \includegraphics[width=\textwidth]{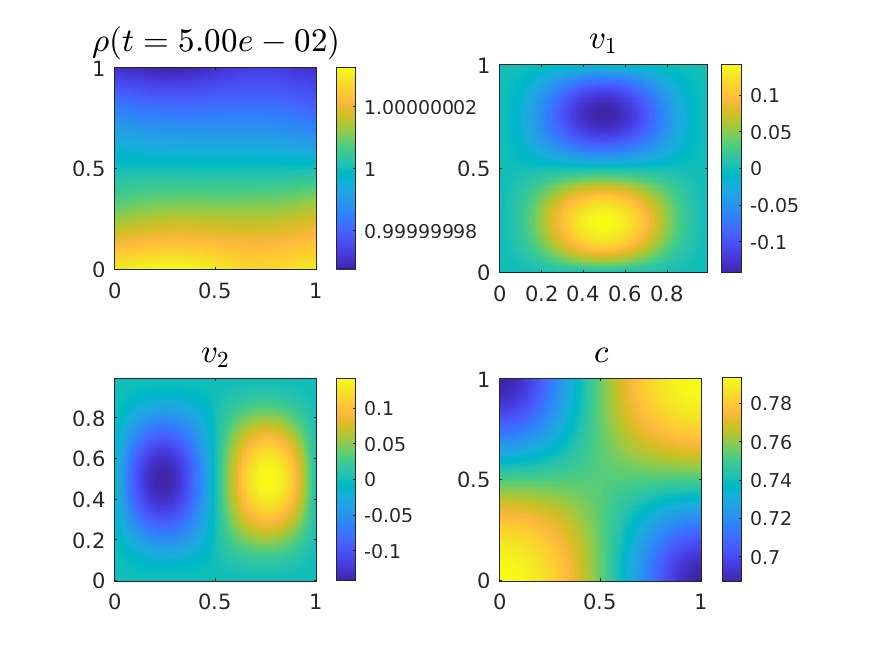}
    \end{minipage}
    \caption{Results for Test 2, $T=0.03, 0.05$, $M=128$ and $C_p=10^8$. 
    The bubbles formed order parameter start to merge around $\frac34$ also growing in size.
    The density is higher at the bottom of the domain due to gravity.}
    \label{fig_test_2_Cp_10_8_2}
\end{figure}

\begin{figure}[h!]
    \centering
    \begin{minipage}[b]{0.45\textwidth}
        \centering
        {\small $T=0.07$} \\[0.3em]
        \includegraphics[width=\textwidth]{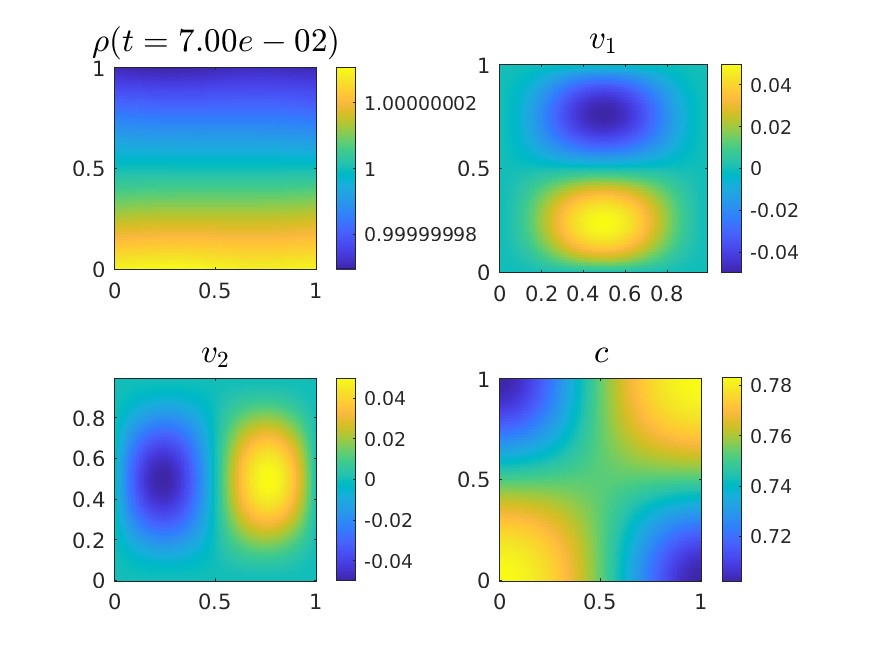}
    \end{minipage}
    \hfill
    \begin{minipage}[b]{0.45\textwidth}
        \centering
        {\small $T=0.1$} \\[0.3em]
        \includegraphics[width=\textwidth]{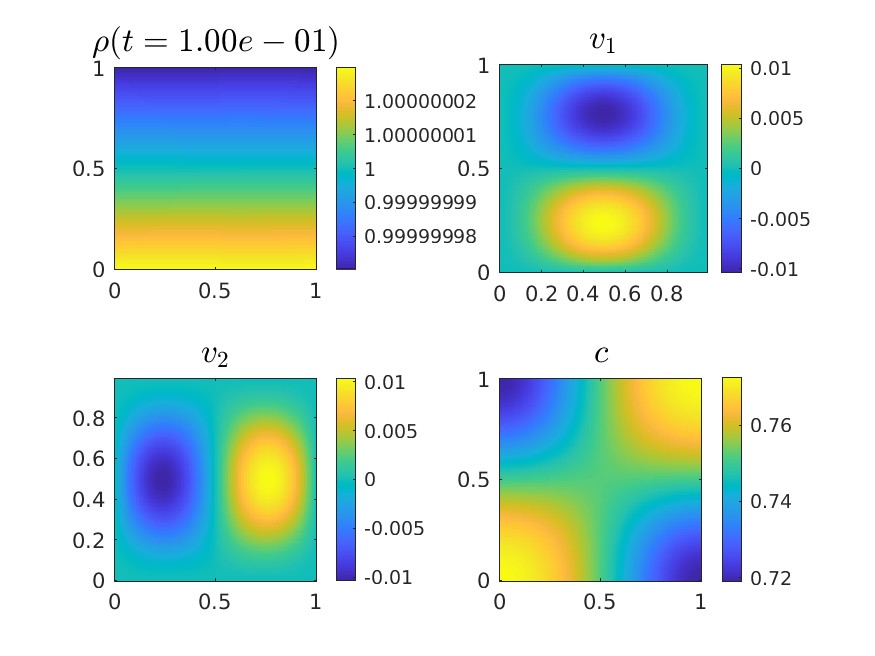}
    \end{minipage}
    \caption{Results for Test 2, $T=0.07, 0.1$, $M=128$ and $C_p=10^8$. 
    Density remains almost constant, while the order parameter starts tending to $\frac34$.}
    \label{fig_test_2_Cp_10_8_3}
\end{figure}

\textbf{Test 3} 
The aim of this test, taken from \cite{MMY25,mulet_24}, is to show the spinodal decomposition.
To this end, the initial conditions are set as $\rho_0=1$, $\mathbf{v}_0=0$, and $c_0$ is initialized as a uniform random sample of zero mean
and $10^{-10}$ standard deviation.

Figures \ref{fig_test_3_Cp_10_8_1} and \ref{fig_test_3_Cp_10_8_2} show the results up to $T=0.1$, where the spinodal decomposition occurs at the beginning of the simulations. 
In addition, density becomes higher near the bottom boundary $y=0$ due to the gravitational effects, but it seems to be stabilized as time evolves, see Figure \ref{fig_test_3_Cp_10_8_2}.

\begin{figure}[h!]
    \centering
    \begin{minipage}[b]{0.45\textwidth}
        \centering
        {\small $T=0$} \\[0.3em]
        \includegraphics[width=\textwidth]{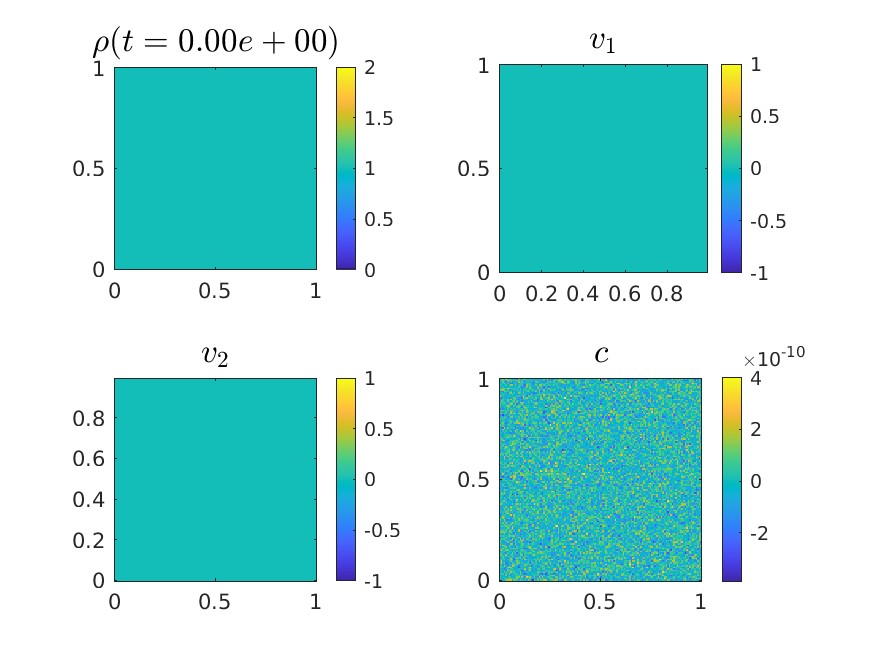}
    \end{minipage}
    \hfill
    \begin{minipage}[b]{0.45\textwidth}
        \centering
        {\small $T=0.01$} \\[0.3em]
        \includegraphics[width=\textwidth]{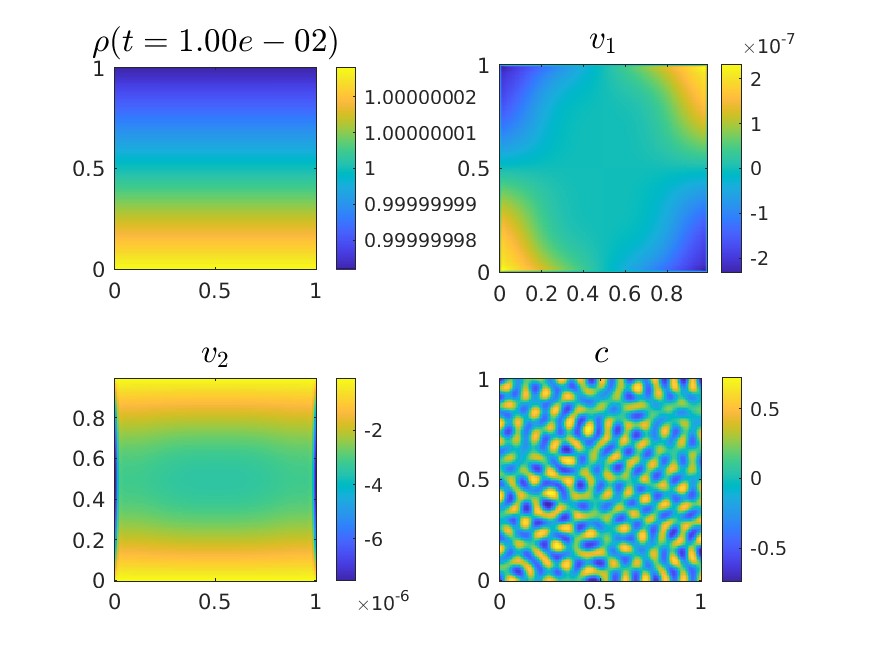}
    \end{minipage}
    \caption{Results for Test 3, $T=0, 0.01$, $M=128$ and $C_p=10^8$. 
    Initially, density is constant, velocity is zero everywhere, and the order parameter is a random perturbation around $c=0$.
    It can be seen at $T=0.01$ that phase separation has started to occur, while density is higher at the bottom.}
    \label{fig_test_3_Cp_10_8_1}
\end{figure}

\begin{figure}[h!]
    \centering
    \begin{minipage}[b]{0.45\textwidth}
        \centering
        {\small $T=0.03$} \\[0.3em]
        \includegraphics[width=\textwidth]{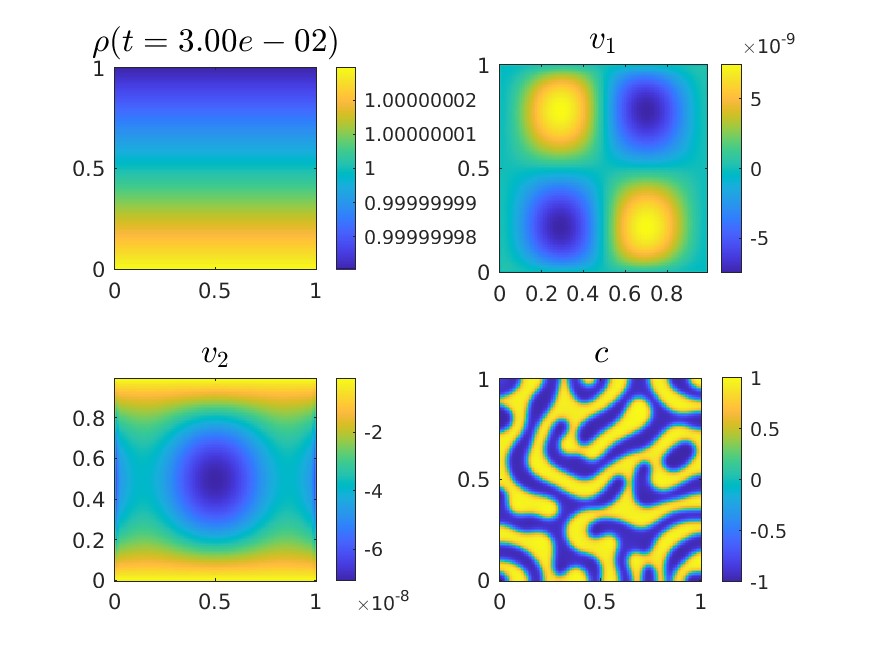}
    \end{minipage}
    \hfill
    \begin{minipage}[b]{0.45\textwidth}
        \centering
        {\small $T=0.05$} \\[0.3em]
        \includegraphics[width=\textwidth]{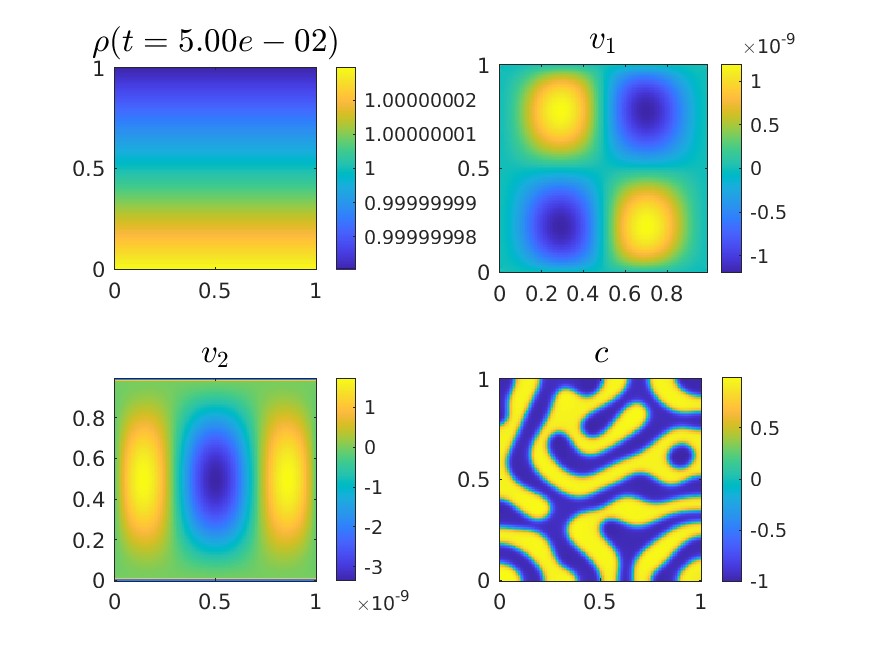}
    \end{minipage}
    \\[0.5em]
    \begin{minipage}[b]{0.45\textwidth}
        \centering
        {\small $T=0.07$} \\[0.3em]
        \includegraphics[width=\textwidth]{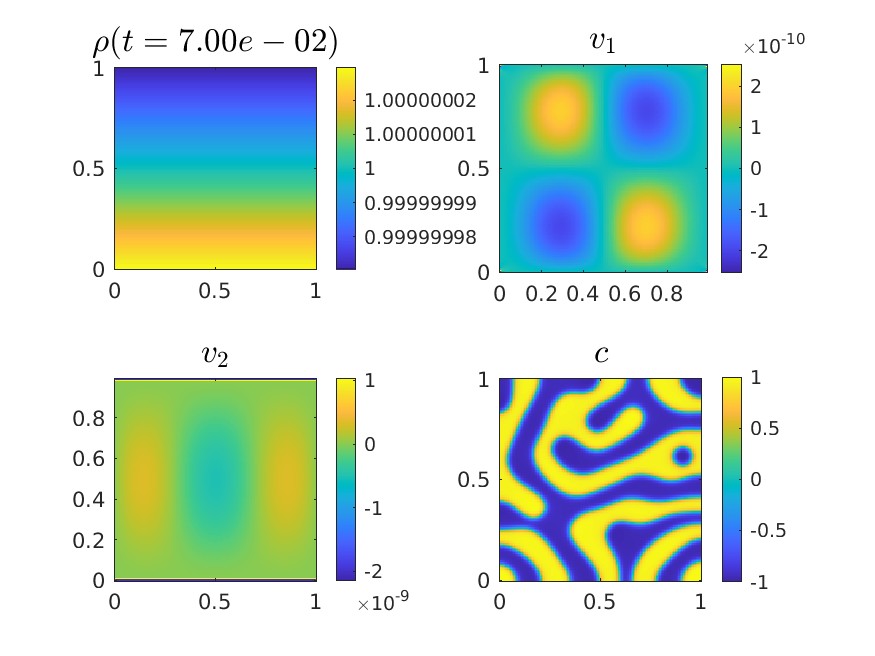}
    \end{minipage}
    \hfill
    \begin{minipage}[b]{0.45\textwidth}
        \centering
        {\small $T=0.1$} \\[0.3em]
        \includegraphics[width=\textwidth]{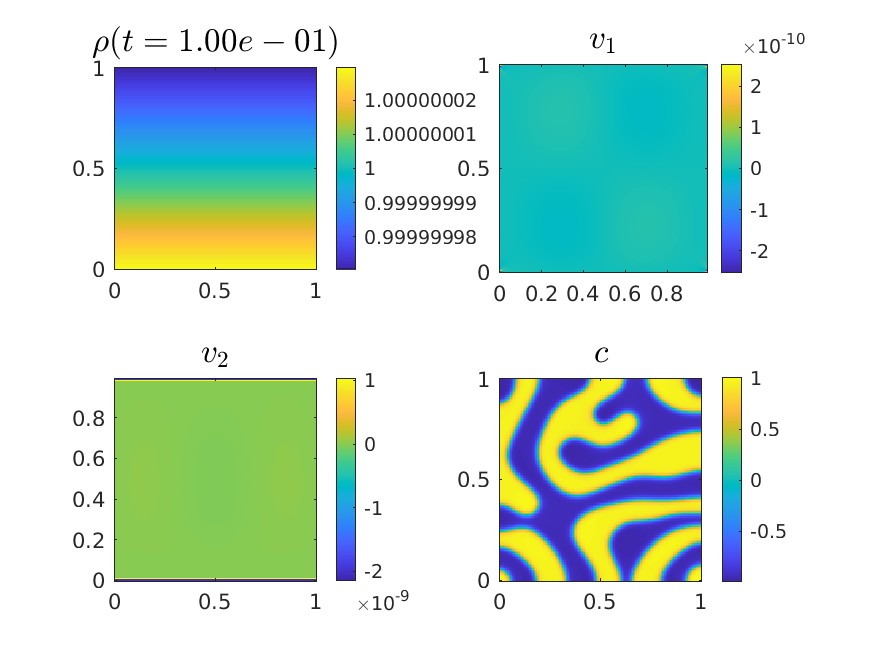}
    \end{minipage}
    \caption{Results for Test 3, $T=0.03, 0.05, 0.07, 0.1$, $M=128$ and $C_p=10^8$. 
    The process of phase separation continues, and the density seems to stabilize with higher values at the bottom of the domain.}
    \label{fig_test_3_Cp_10_8_2}
\end{figure}

\subsubsection{Conservation of mass and bound-preserving properties}
Figure \ref{fig_err_mass} illustrates the conservation of mass for the three tests.
Specifically, the mass conservation errors for $\rho$ and $q=\rho c$ are computed using
\begin{equation*}
  \begin{split}
    \int_{\Omega}\!\rho(\mathbf{x},t_n)\ d\mathbf{x} - \int_\Omega\!\rho(\mathbf{x},0)\ d\mathbf{x}
    &= \sum_{k=1}^{M^2}\!\varrho_k^n - \sum_{k=1}^{M^2}\!\varrho^0_k,\\[5pt]
    \int_{\Omega}\!q(\mathbf{x},t_n)\ d\mathbf{x} - \int_\Omega\!q(\mathbf{x},0)\ d\mathbf{x}
    &= \sum_{k=1}^{M^2}\!(\varrho\ast C)_k^n - \sum_{k=1}^{M^2}\!(\varrho\ast C)^0_k.
  \end{split}
\end{equation*}
On the other hand, the order parameter $c$ has not exceeded $[-1,1]$ considerably, and it has been kept below its bounds throughout all experiments. 
Table \ref{table_c_dispersion} shows the maxim and minimum value of $c$ during the performance.
Figure \ref{fig_c_evol} shows the time evolution of the maximum and minimum values of the $c$-component, which rarely exceed the interval $[-1,1]$.
Therefore, the chosen CFL number of $0.4$ can be considered safe for our simulations.

\begin{figure}[h!]
    \centering
    \begin{minipage}[b]{0.30\textwidth}
        \centering
        \includegraphics[width=\textwidth]{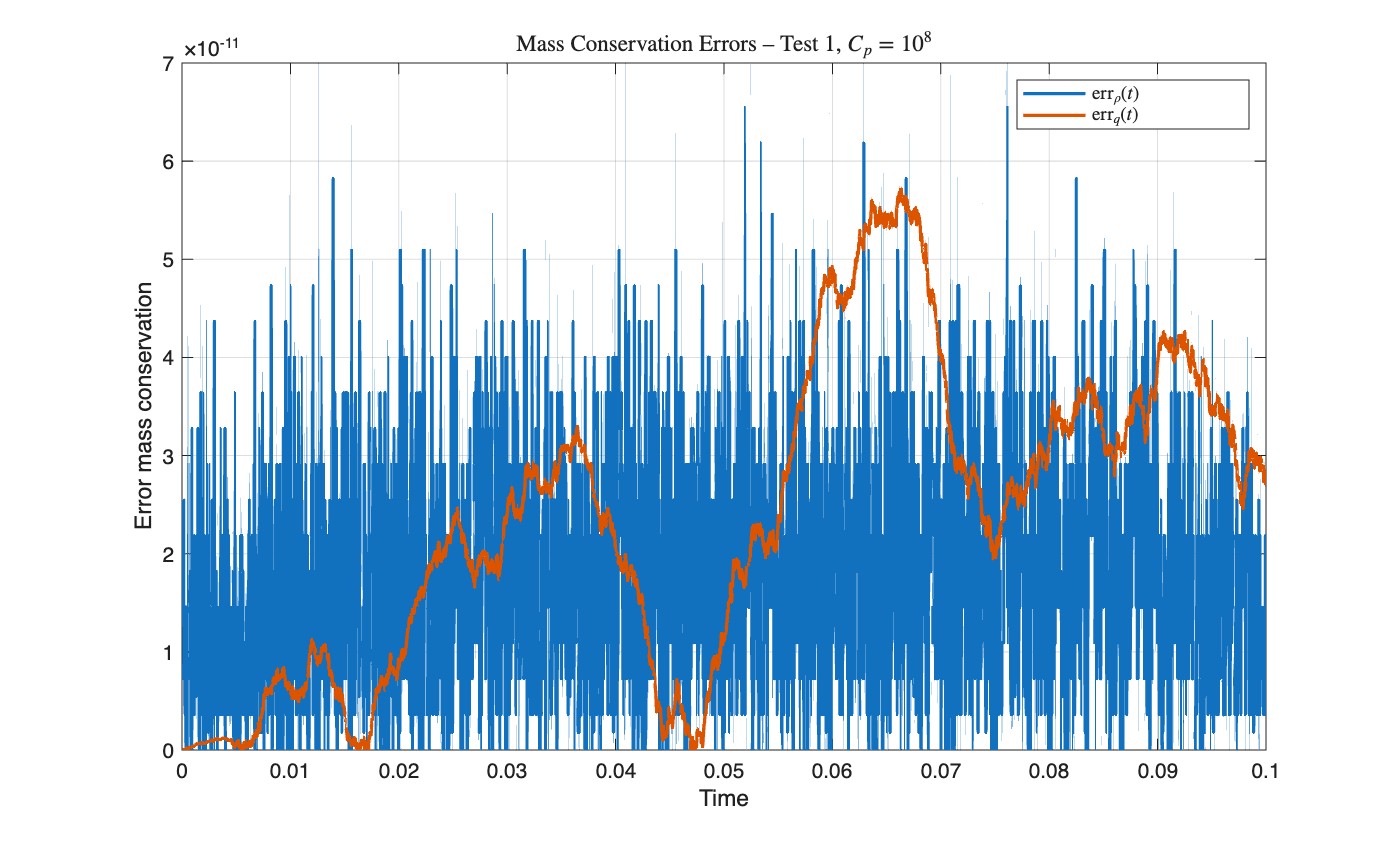}
    \end{minipage}
    \hfill
    \begin{minipage}[b]{0.30\textwidth}
        \centering
        \includegraphics[width=\textwidth]{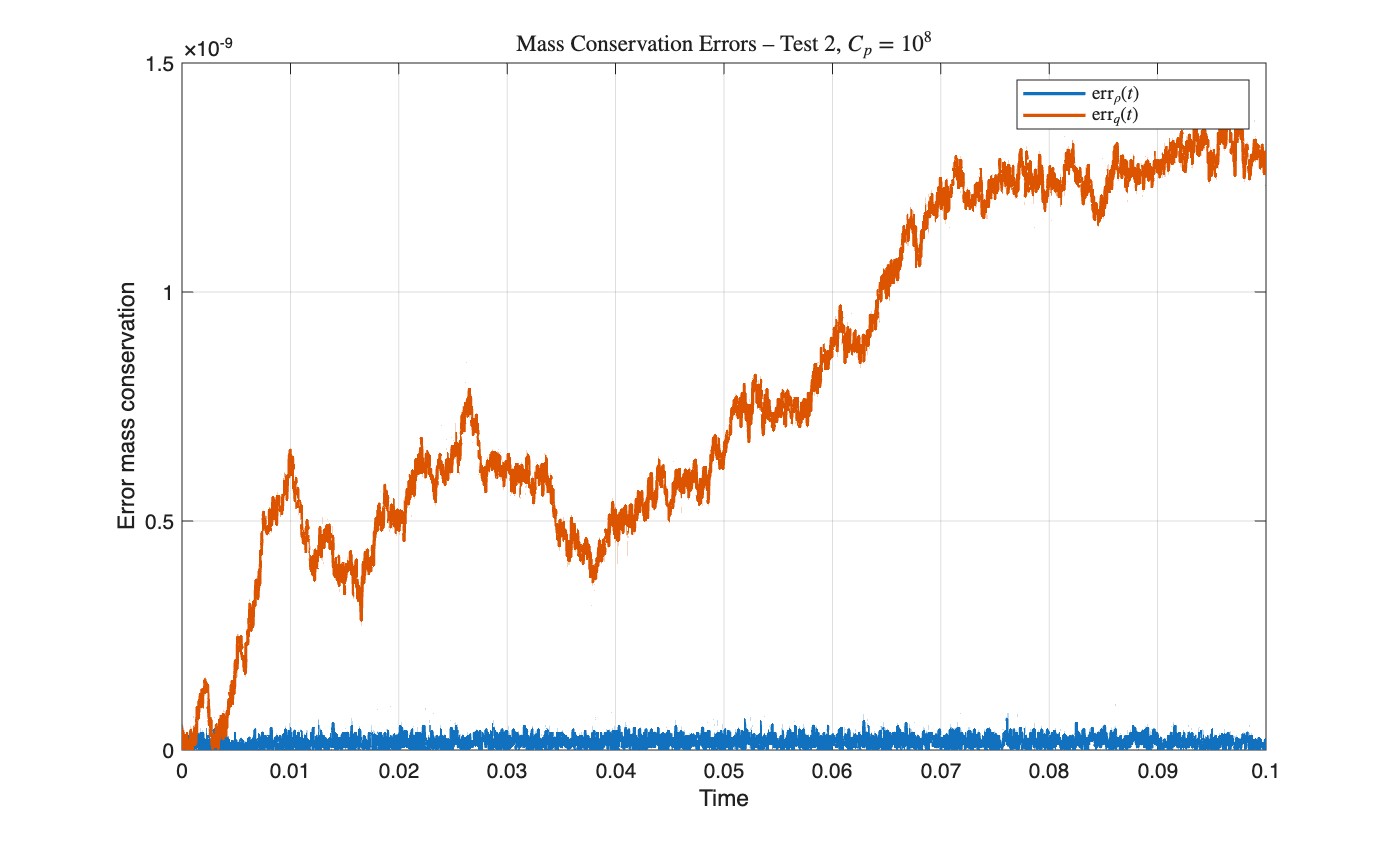}
    \end{minipage}
    \hfill
    \begin{minipage}[b]{0.30\textwidth}
        \centering
        \includegraphics[width=\textwidth]{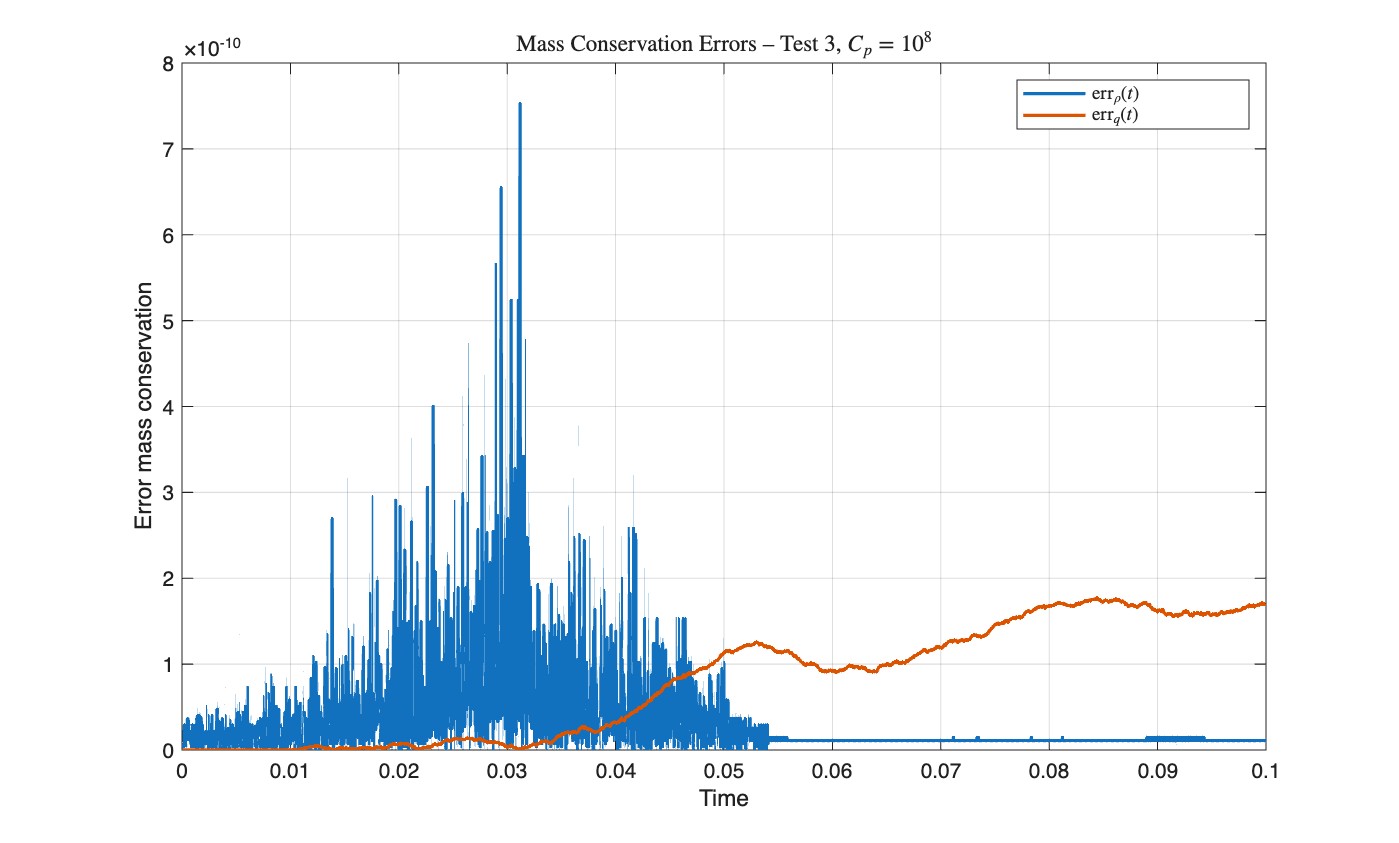}
    \end{minipage}
    \caption{Time evolution of the mass conservation errors for both $\rho$ and $q$ with $M=128$ for Test 1, 2, and 3 for $C_p=10^8$.}
    \label{fig_err_mass}
\end{figure}

\begin{figure}[h!]
    \centering
    \begin{minipage}[b]{0.30\textwidth}
        \centering
        \includegraphics[width=\textwidth]{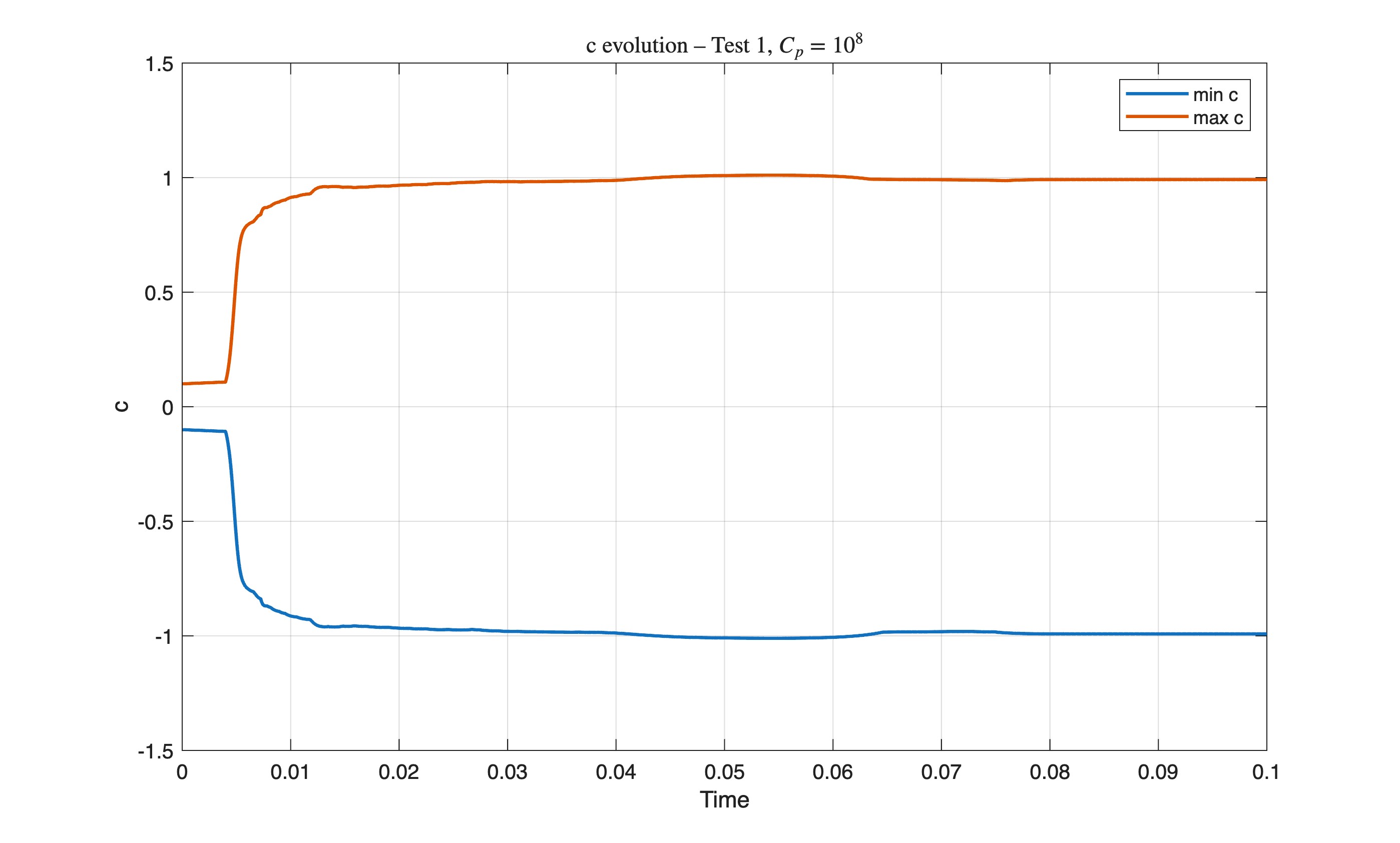}
    \end{minipage}
    \hfill
    \begin{minipage}[b]{0.30\textwidth}
        \centering
        \includegraphics[width=\textwidth]{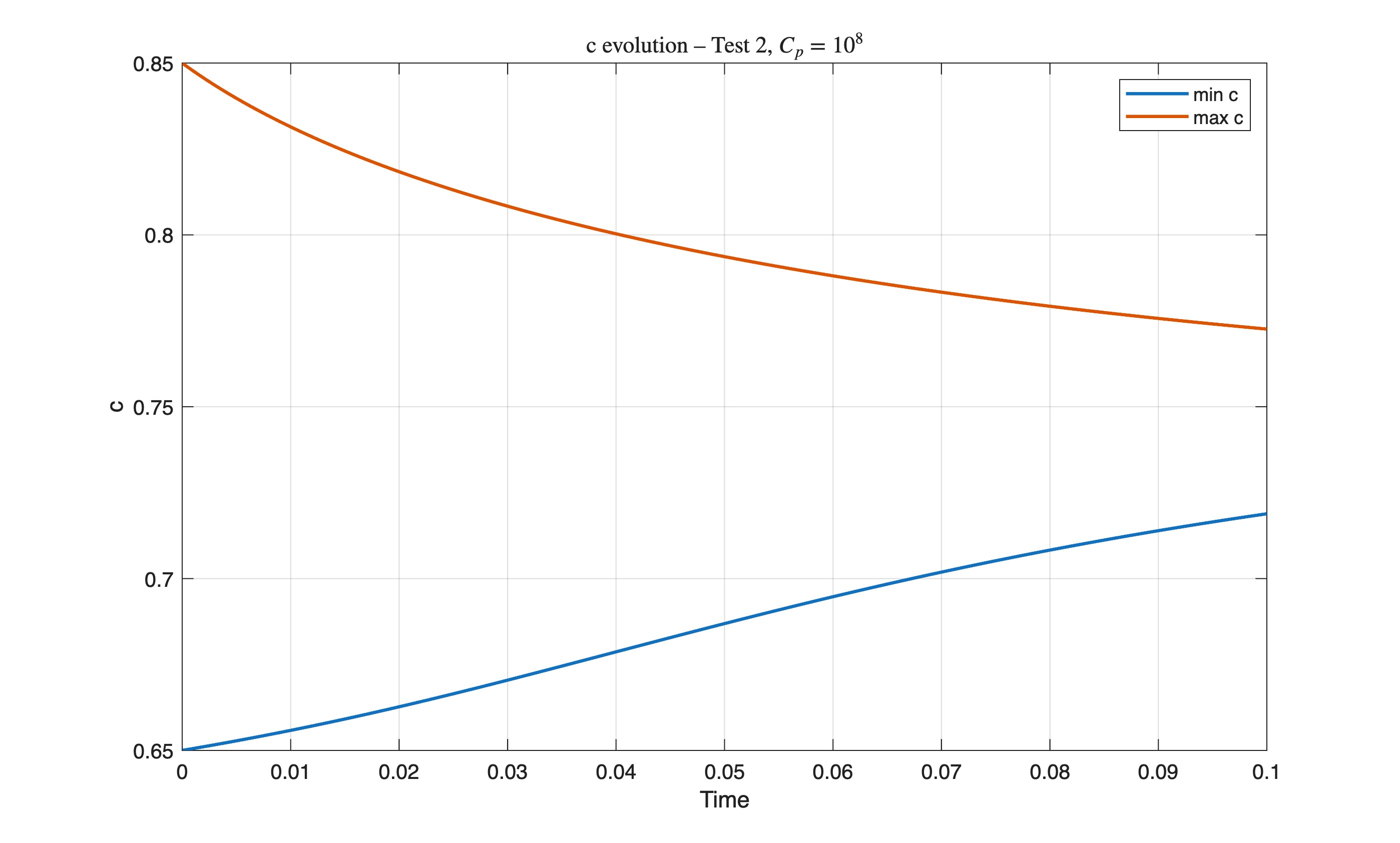}
    \end{minipage}
    \hfill
    \begin{minipage}[b]{0.30\textwidth}
        \centering
        \includegraphics[width=\textwidth]{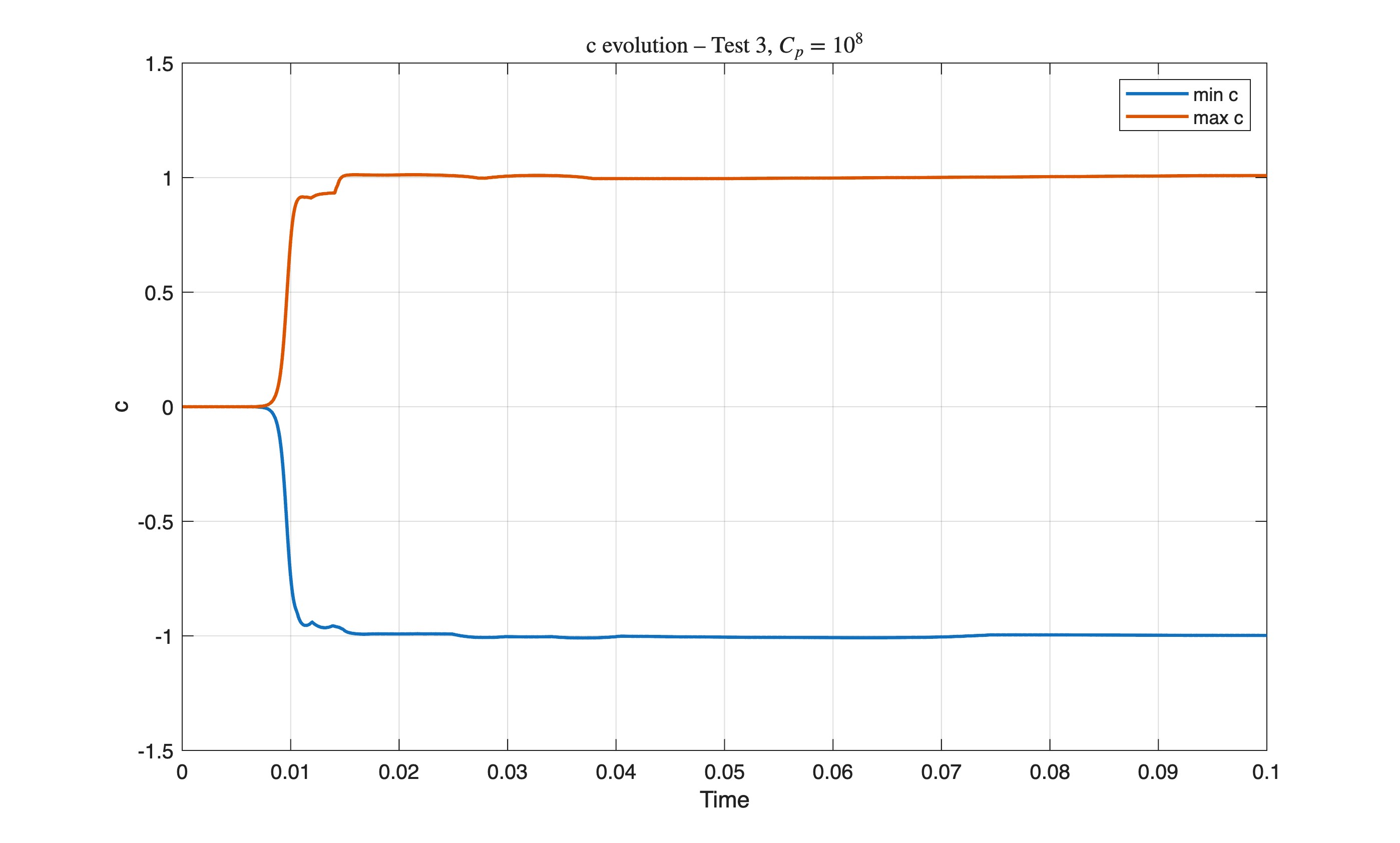}
    \end{minipage}

    \caption{Time evolution of the maximum and minimum values of the order parameter $c$ with $M=128$ for Test~1, 2, and 3 for $C_p=10^8$.}
    \label{fig_c_evol}
\end{figure}

\begin{table}
\centering
\begin{tabular}{c|cccc}
\hline
 & $C_p = 10^2$ & $C_p = 10^4$ & $C_p = 10^6$ & $C_p = 10^8$ \\
\hline
Test 1 & 
$\begin{matrix} 0.9976 \\ -0.9924 \end{matrix}$ &
$\begin{matrix} 1.0130 \\ -1.0128 \end{matrix}$ &
$\begin{matrix} 1.0113 \\ -1.0114 \end{matrix}$ &
$\begin{matrix} 1.0103 \\ -1.0103 \end{matrix}$ \\[6pt]
\hline
Test 3 &
$\begin{matrix} 1.0097 \\ -1.0100 \end{matrix}$ &
$\begin{matrix} 1.0121 \\ -1.0100 \end{matrix}$ &
$\begin{matrix} 1.0112 \\ -1.0128 \end{matrix}$ &
$\begin{matrix} 1.0122 \\ -1.0092 \end{matrix}$ \\
\hline
\end{tabular}
\caption{Maximum and minimum values for the $c$ evolution in Test 1 and 3.}
\label{table_c_dispersion}
\end{table}

\subsubsection{Low Mach number regime}
Here, we analyze the limit of the compressible scheme $\mathcal{M}^\delta_\Delta$ toward the incompressible scheme $\mathcal{M}^0_\Delta$.
To this end, we focus on the three previous tests specially when the squared Mach numbers are $\delta=10^{-2k}$ for $k=2,3,4$.
Note that in these tests, the initial conditions are well-prepared in the sense of \eqref{eq_well_prepared_data}.
In addition, according to Theorem \ref{teo_AP}, each time-step must be well-prepared, ensuring that the scheme is AP.
Figures \ref{fig_div_free_test_1_2_3} and \ref{fig_incompressibility_prop_test} illustrate this behavior: the density approaches to $1$, and the divergence free condition is satisfied.

\begin{figure}[h!]
  \centering
  \begin{minipage}[b]{0.32\textwidth}
        \centering
        \includegraphics[width=\textwidth]{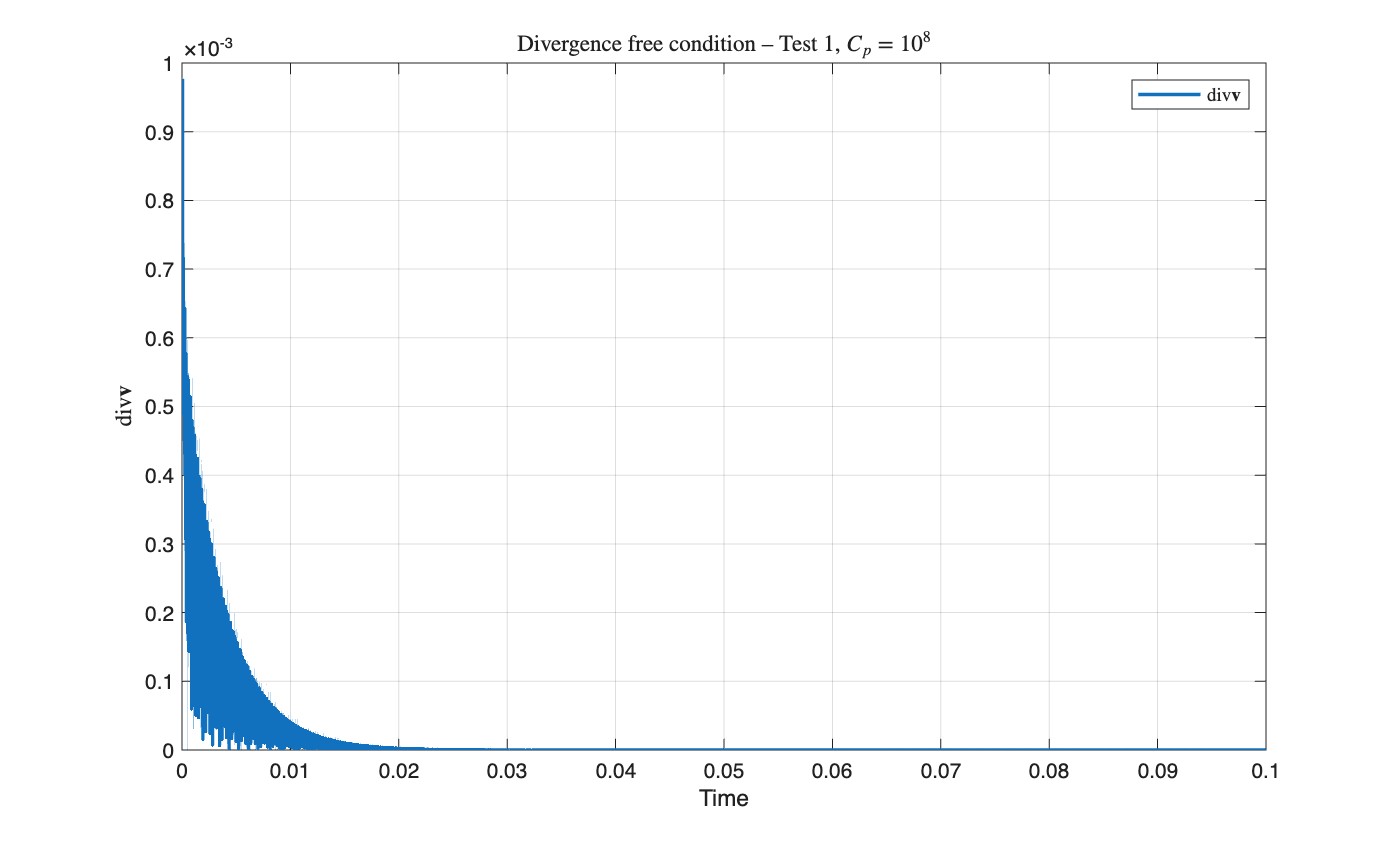}
    \end{minipage}
    \hfill
    \begin{minipage}[b]{0.32\textwidth}
        \centering
        \includegraphics[width=\textwidth]{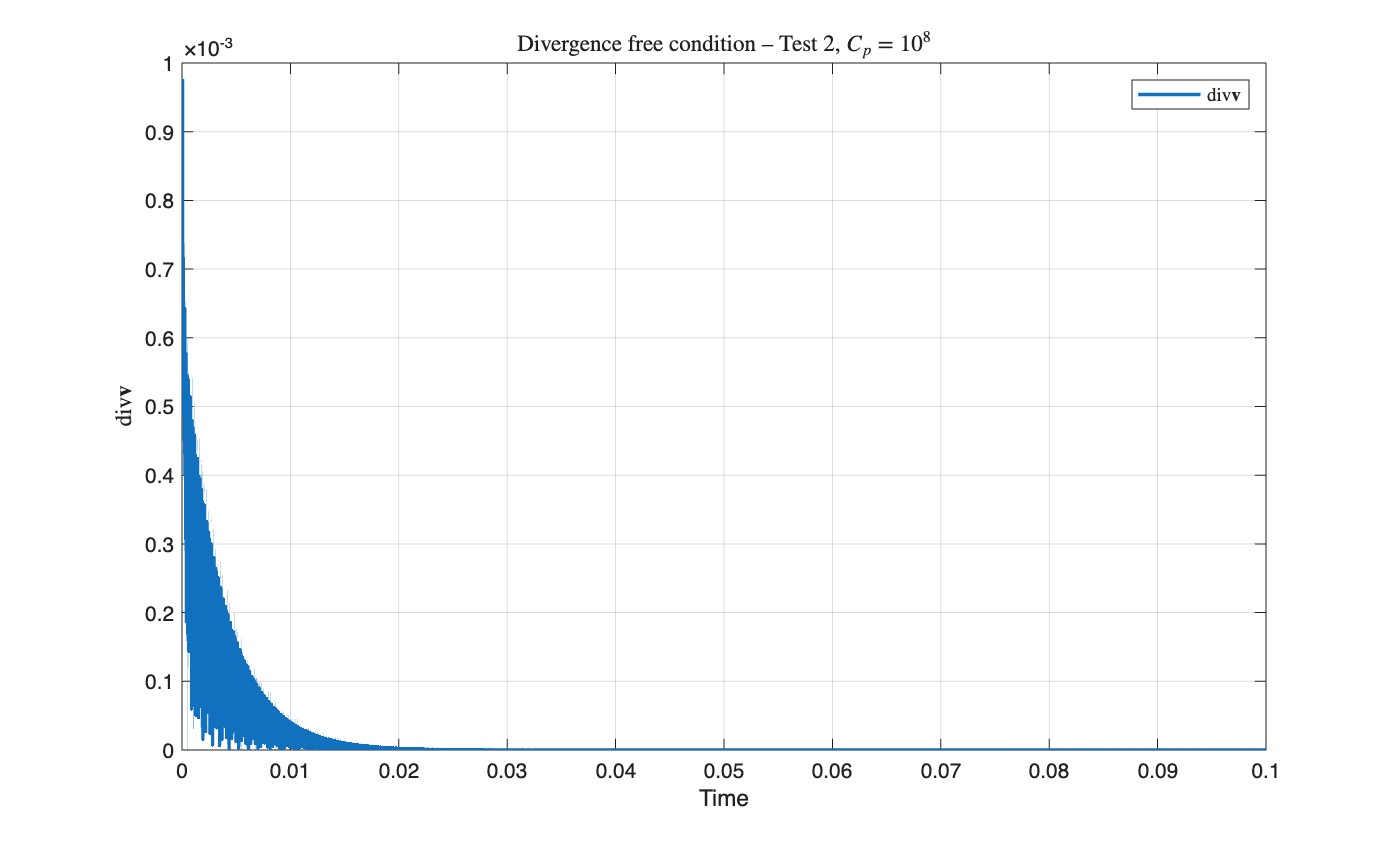}
    \end{minipage}
    \hfill
    \begin{minipage}[b]{0.32\textwidth}
        \centering
        \includegraphics[width=\textwidth]{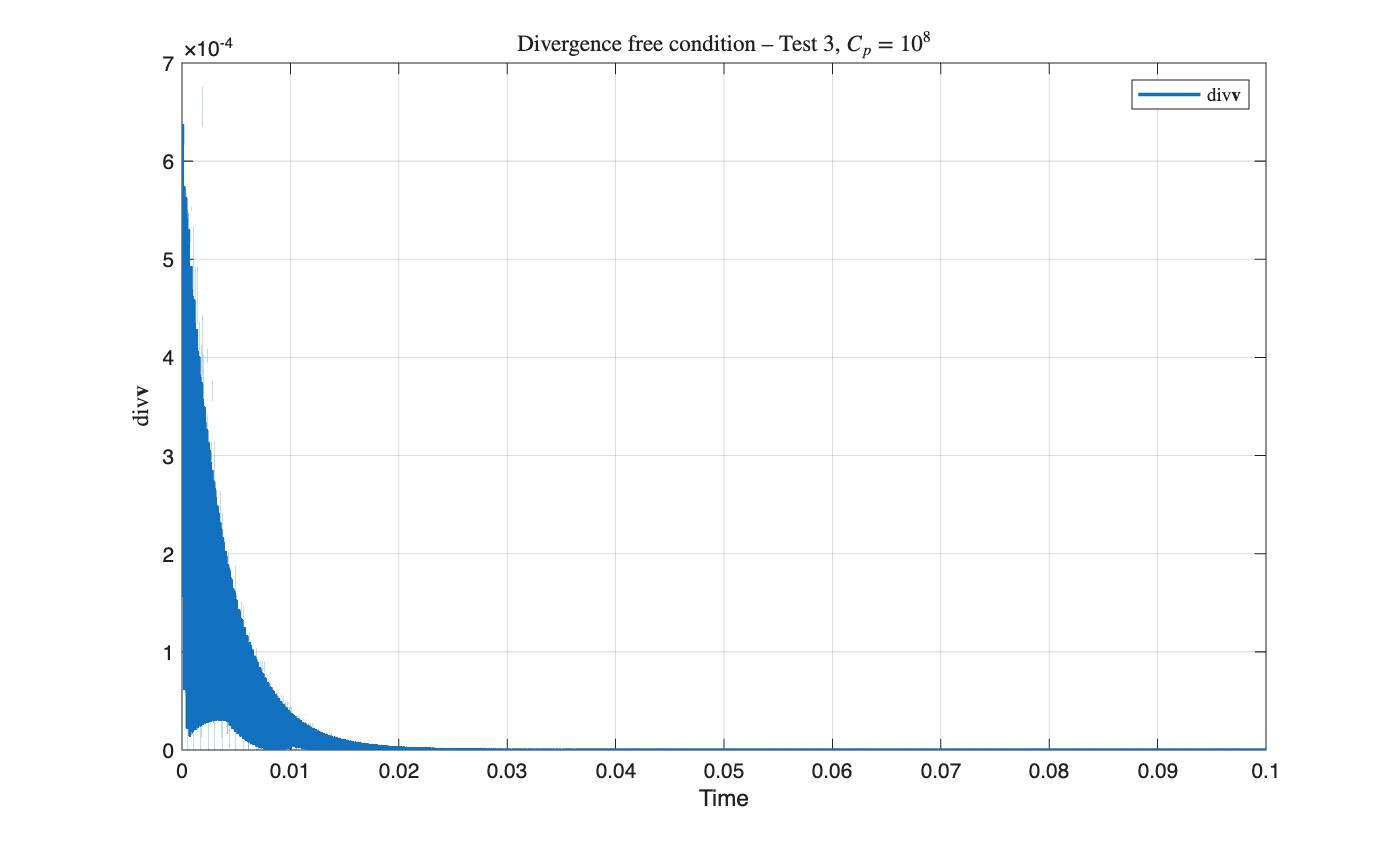}
    \end{minipage}
    \caption{Divergence free condition for Test 1, Test 2 and Test 3 with $M=128$ and $C_p=10^{8}$.}
    \label{fig_div_free_test_1_2_3}
\end{figure}

\begin{figure}[h!]
  \centering
    \begin{minipage}[b]{0.32\textwidth}
        \centering
        \includegraphics[width=\textwidth]{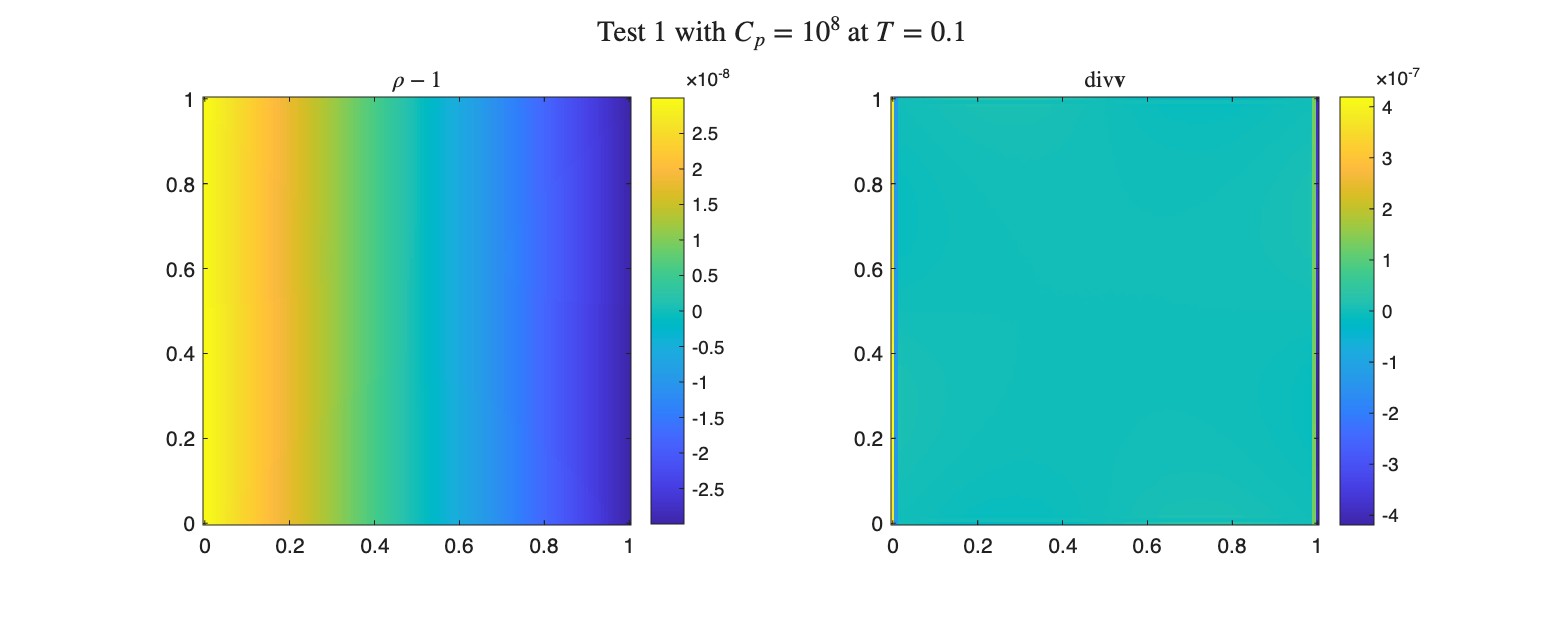}
    \end{minipage}
    \hfill
    \begin{minipage}[b]{0.32\textwidth}
        \centering
        \includegraphics[width=\textwidth]{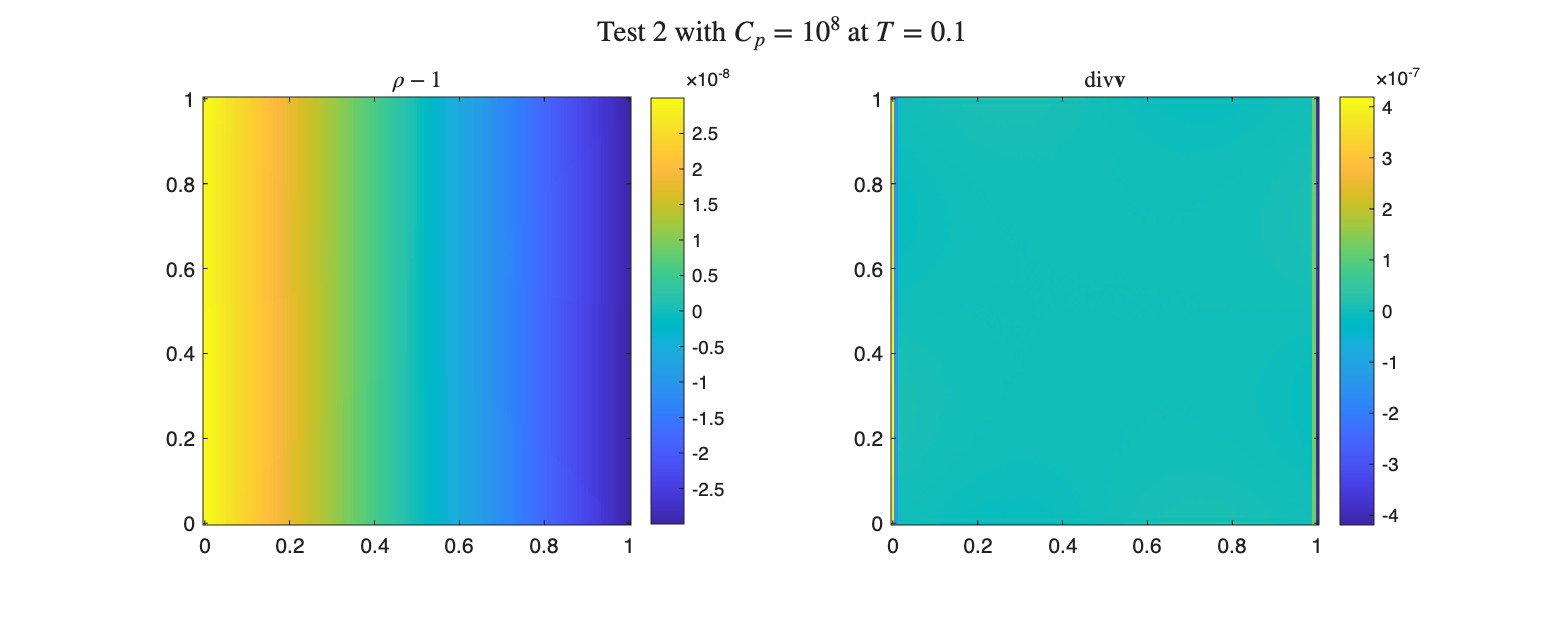}
    \end{minipage}
    \hfill
    \begin{minipage}[b]{0.32\textwidth}
        \centering
        \includegraphics[width=\textwidth]{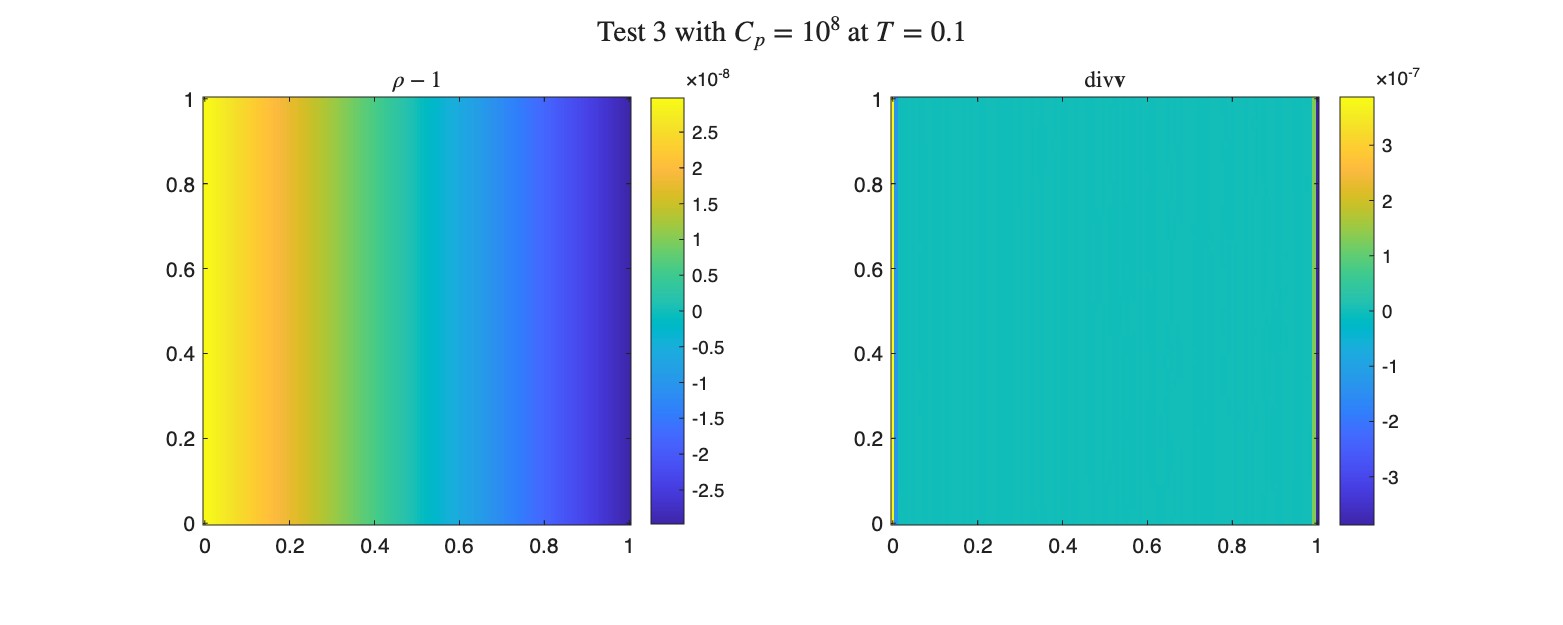}
    \end{minipage}
    \caption{Well-preparedness of the solution for Test 1, 2, and 3 at $T=0.1$ with $M=128$ and $C_p=10^{8}$.}
    \label{fig_incompressibility_prop_test}
\end{figure}

\section{Conclusions and future work}\label{section_conlusion}
In this work, we present an efficient second-order asymptotic-preserving IMEX schemes on staggered grids for the two-dimensional compressible isentropic Cahn-Hilliard-Navier-Stokes equations for any Mach number regime.
The proposed method avoids the severe restriction imposed by the high-order and stiff pressure terms.
To validate the method, several numerical test have been performed, showing that second-order accuracy is achieved with the time-step constrained only by the convective subsystem of the equations.

For future work, we aim to extend the present framework to the non-barotropic compressible Cahn-Hilliard-Navier-Stokes in a low Mach number regime, as well as to the three-dimensional case using Galerkin techniques.

Regarding the pressure splitting defined in \eqref{pressure_splitting}, we plan to further research on the possible range of $C_{p,i}$ values. 
Our current strategy of setting $C_{p,1}=\sqrt{C_p}$ has proven to be successful in our experiments, although no formal proof is given.

When solving systems \eqref{nonlinear_subsystem} and \eqref{system_c}, neither the positivity of the density $\rho$ nor the boundedness of the order parameter $c\in[-1,1]$ can be guaranteed.
To address this issue, we plan to employ bound-preserving high-order reconstructions schemes which can effectively circumvent these physical constraints.

\subsection*{Conflict of interest} The authors declare that they have no conflict of interest.

\subsection*{Data Availability Statements} Data sharing is not applicable to this article as no datasets were generated or analyzed during the current study.

\section*{Acknowledgments}
This paper has received financial support from the research projects
PID2023-146836NB-I00, granted by MCIN/ AEI /10.13039/ 501100011033, and 
CIAICO/2024/089, granted by GVA.

\end{document}